\documentclass{IEEEtran}
\IEEEoverridecommandlockouts \overrideIEEEmargins
\bibliographystyle{/Users/daniel/Library/texmf/bibtex/IEEEtran}
\usepackage{times} 
\usepackage{pdfsync} 
\usepackage{epsfig}
\usepackage{epstopdf}
\usepackage{amsmath}  \interdisplaylinepenalty=2500
\usepackage{amssymb}
\usepackage{color} 
\usepackage{cite} 
\usepackage{array}
\usepackage{mathrsfs}
\usepackage[english]{babel} 
\usepackage{fix2col}

\newcommand{\Rset}{\mathbb{R}}
\newcommand{\Nset}{\mathbb{N}}

\newcommand{\Kset}{\mathbb{K}}

\newcommand{\Bset}{\mathbb{B}}
\newcommand{\Jset}{\mathbb{J}}

\newcommand{\Prob}{\mathbf{Pr}}

\newcommand{\E}{\mathbf{E}}

\newcommand{\Gauss}{\mathcal{N}}

\newcommand{\eq}{\triangleq}
\newcommand{\hfs}{\hfill\ensuremath{\square}}

\DeclareMathOperator{\eigs}{eigs}
\DeclareMathOperator{\diag}{diag}
\DeclareMathOperator{\tr}{tr}
\DeclareMathOperator{\node}{Node}
\DeclareMathOperator{\edge}{Edge}

\DeclareMathOperator{\parent}{Par}
\DeclareMathOperator{\path}{Path}

\newcommand{\Ecal}{\mathcal{E}}

\newcommand{\Ical}{\mathcal{I}}

\newtheorem{thm}{Theorem} 
\newtheorem{defi}{Definition}
\newtheorem{coro}{Corollary} 
\newtheorem{lem}{Lemma} 
\newtheorem{rem}{Remark}
\newtheorem{ex}{Example} 
\newtheorem{ass}{Assumption} 



\title{State Estimation over Sensor
  Networks with Correlated Wireless Fading Channels}

\author{Daniel~E.~Quevedo,~\IEEEmembership{Member,~IEEE,}
  Anders~Ahl\'en,~\IEEEmembership{Senior Member,~IEEE,}\\
  and Karl H.\ Johansson,~\IEEEmembership{Senior Member,~IEEE}%
\thanks{%
    Daniel Quevedo is
  with the School of Electrical Engineering \&
  Computer Science, The University of Newcastle, NSW
  2308, Australia; dquevedo@ieee.org. 
  Anders~Ahl\'en is with Signals and Systems, Uppsala University,  SE-751 21,
  Uppsala, Sweden; Anders.Ahlen@signal.uu.se. 
  Karl H.\ Johansson is with ACCESS Linnaeus Centre, School of Electrical
  Engineering, Royal Institute of 
  Technology, Stockholm, Sweden; kallej@ee.kth.se}
\thanks{This research was supported in part under Australian Research Council's
  Discovery Projects funding scheme (project number DP0988601), and
  by the VINNOVA project WiComPI, project Dnr2009-02963. A preliminary version
  of parts of this work was presented as\cite{queahl11b}.}}

\begin{document}

\maketitle

\begin{abstract}               
Stochastic stability for centralized time-varying  Kalman filtering over a wireless
sensor network with correlated fading channels is studied. On their
route to the gateway, sensor packets, possibly 
aggregated with measurements from several nodes, may be dropped because of
fading links. To study this situation, we introduce a network state  process, which describes a finite
set of configurations 
 of the radio environment. The network state characterizes the channel gain
 distributions  of the
  links, which are allowed to be correlated between each
 other. Temporal correlations of channel gains are modeled by allowing the network state
 process to form a (semi-)Markov chain. We establish
sufficient conditions that ensure the Kalman filter to be exponentially bounded. In the one-sensor case, this new stability condition is shown to 
include previous results obtained in the literature as special cases. The
results also hold when using power and bit-rate control policies, where the transmission
power and bit-rate of each node  are nonlinear mapping of the network state and 
channel gains.  
\end{abstract}

\begin{keywords}
 Sensor networks, Kalman filtering, Packet drops, stability. 	
\end{keywords}

\section{Introduction}
 \label{sec:introduction}
Wireless sensor technology is of growing interest for process and
automation industry. The driving force behind using wireless
technology in monitoring and control applications is its lower
deployment and reconfiguration cost.
In addition, wireless devices can be placed where wires 
cannot go, or where power sockets are not available; see, e.g.,
\cite{chejoh11,aluinn11,bjonet11,ilymah04,shezha07}.   
\par A drawback of  wireless communication
technology lies in that wireless channels are subject to
fading and interference, which frequently lead to packet errors. 
The wireless channel is in general time varying. This time-variability
may in an industrial setting be caused by 
moving machines, vehicles, people, and so forth, or when the receiver
or the transmitter are mounted on a moving object. Therefore, in
addition to the propagation path loss, channels will commonly be
subject to shadow and small-scale fading
\cite{goldsm05,proaki95}. The channel fading 
 can be partially compensated for through control of bit-rates and the power
 levels used by the radio amplifiers; see,
 e.g.,\cite{caitar99,bergal02,queahl10,queahl12}. The loss of information due to
 channel fading is one of the main problems of wireless estimation and control
 systems, leading to stability and performance degradation.

\par Several interesting approaches have been reported for 
state estimation of  linear time-invariant (LTI) systems via  wireless sensor
networks. For example, the works \cite{shicap10} and \cite{shi09b}
focus on delay issues in a 
sensor network with no dropouts, whereas \cite{gupdan09} studies the
effect of dropouts within an architecture  with only one sensor node, but
where additional relay nodes are allowed to process data. The paper
\cite{chisch11} examines various information fusion strategies for distributed
state estimation in sensor
networks having a star topology.  In the recent 
work\cite{mogar11}, the authors examine sensor scheduling for 
networks with a tree topology and no dropouts. 
 The issue of Kalman filter stability (i.e.,
boundedness of the estimation error covariance matrix) has received significant
attention. In particular, \cite{sinsch04} focused on LTI plants and a
single-link architecture 
where dropout
processes are independent and identically distributed (i.i.d.). By using a
fixed-point argument, \cite{sinsch04}  established
that there exists a critical dropout probability value which separates
situations where the expected value of the estimator covariance matrix remains bounded
from instances where it diverges, see
also \cite{plabul09,karsin12} and \cite{liugol04}.\footnote{If, instead of the expected value of the
  covariance matrix, other criteria are used, then different critical dropout
  probabilities will be obtained\cite{censi11,huadey07,rohmar10}. Alternatively, the
  works\cite{censi11,epsshi08,shieps10,karsin12} directly examine the distribution of the
  covariance matrix.} The latter work  examines  a
state estimation architecture with two 
channels affected 
by  i.i.d.\ dropouts. The case  where
 dropouts are described 
by  a time-homogeneous two-state Markov chain
was investigated in \cite{huadey07,jingup06,xiexie08,shieps10,youfu11}.
Recently,\cite{censi11} studied the one-sensor case with dropouts governed
by a, more general, 
semi-Markov chain. \emph{Inter-alia}, that work combined results of
\cite{stenfl96}  with bounds established in 
\cite{bouger93} to show that,  under mild conditions, the empirical
covariance of the estimation error converges to a unique stationary
distribution.

\par In the present work, we study centralized state estimation for linear time-varying
systems via wireless sensor networks with a tree topology. Communication links
are subject to random and possibly correlated packet dropouts.  Based on
motivating case 
studies from process 
industry, we assume that 
in-network processing is much faster than the dynamics of the system
whose state is being estimated and, thus, neglect delays introduced by
the network.   A key contribution is the proposal of a sensor
network fading model, which allows for spatial and temporal correlations of
channel gains and, thereby, packet dropouts. For that purpose, we introduce
 a \emph{network state} process. The latter models shadow fading effects by describing a finite set of configurations
 of the radio environment. The network state characterizes the channel gain
 distributions  of the
 different links, which depend upon  shadow and multi-path fading. To
 model temporal correlations of channel gains, we allow the network state
 process to form a (semi-)Markov chain. Our radio model structure  generalizes  models
previously reported in the literature; see,
e.g.,\cite{gilber60,xiexie08,bergal02,huadey07,smisei03} and  also accounts for power and
bit-rate control of
sensor radio amplifiers.

\begin{figure}[t]
  \centering
  \input{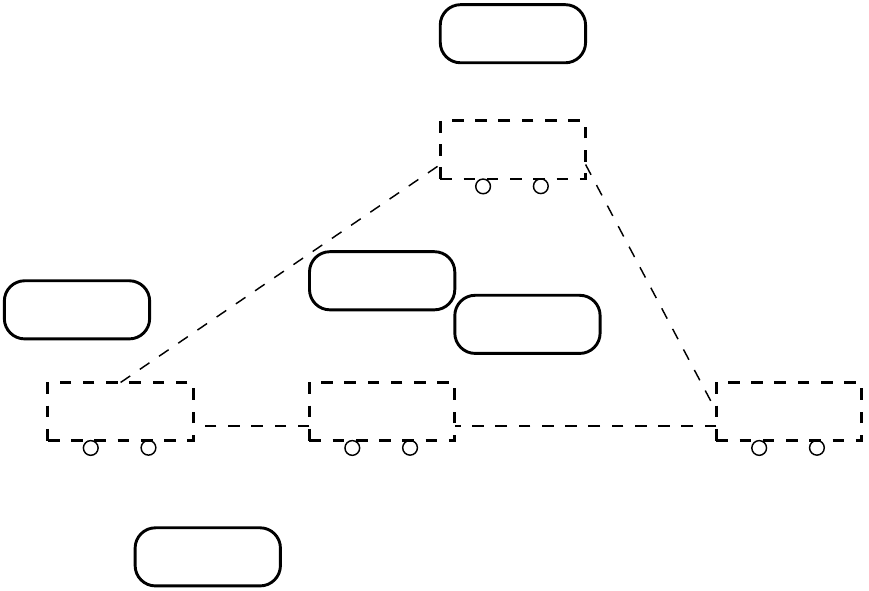_t}
  \caption{Sensor network with moving robot.}
  \label{fig:truck_tac12}
\end{figure}

\par As a motivating example, 
 Fig.~\ref{fig:truck_tac12} schematizes an industrial situation,
 where a mobile robot is moving between four different operational points. The sensor
 network is set up for state estimation at the gateway. This situation can be
 modeled by assigning each robot position to a 
 network state value $\Xi(k)\in\{1,2,3,4\}$. Clearly, different  positions
 will lead to different fading environments, i.e., fading distributions between
 sensors and gateway.

\par  By using elements of stochastic stability theory, we derive sufficient
conditions on the system matrix and network parameters for the trace of the estimator
covariance 
matrix  to be exponentially bounded. For cases where the network state process
forms a Markov Chain, the result obtained  depends upon the transition
probabilities of the network state 
and the conditional probabilities of the associated instantaneous system
observation matrix to 
have full  column rank. For network state processes described by a semi-Markov
Chain, our result depends upon the transition probabilities of the
embedded Markov Chain, conditional holding time distributions, and the
conditional probability of a suitably defined multi-step observability matrix to
have full rank. 
 In special cases, the results obtained correspond to conditions
which have been previously documented in 
the literature on state estimation with packet dropouts. The present paper
expands upon  our recent conference contribution\cite{queahl11b} by   considering
a semi-Markov model for the network state.   

\par The remainder of the paper is organized as follows:
Section~\ref{sec:state-estim-over} describes the sensor network
architecture. The proposed network fading model is presented in
Section~\ref{sec:transmission-effects}. Section~\ref{sec:state-estim-over-1}
characterizes 
the associated state estimator. In
Section~\ref{sec:stability-analysis}, we establish sufficient conditions for
exponential boundedness when the network state process is  Markovian.  Section~\ref{sec:one-sensor-case} studies the special case where the network has only
one sensor. Stability results for state estimation when the network states are
described by a semi-Markov 
chain are derived in
Section~\ref{sec:alternative-model}. Section~\ref{sec:conclusions-1} draws
conclusions. Technical proofs are included in appendices.
 
\subsubsection*{Notation}
\label{sec:notation}
We write $\Nset$ for $\{ 1, 2, \ldots\}$, and $\Nset_0$ for $\Nset\cup\{0\}$;
$\Rset$ are the real numbers, $\Rset_{\geq 0}\eq [0,\infty)$. The notation
$\{\nu\}_{\Nset_0}$ refers to the sequence $\{\nu(0), \nu(1),\dots \}$, and
$\{\nu\}_\ell^k$
to $\{\nu(\ell), \nu(\ell+1), \dots, \nu(k)\}$, with $\{\nu\}_\ell^k=\emptyset$, the empty
set, whenever $\ell>k$. The notation $|\cdot |$ refers to
cardinality of a set. 
Given any vector $v$, its Euclidean norm is denoted $\|v\|=\sqrt{v^Tv}$,
where the superscript $T$ refers to transposition. 
  The trace of a matrix $A$ is denoted by $\tr A$, and its spectral norm  by
$||A||\eq \sqrt{\max \eigs (A^TA)}$, where $\eigs (A^TA)$ are the eigenvalues
of $A^TA$.  If a matrix $A$ is positive definite (non-negative definite), then
we write $A\succ 0$ ($A\succeq 0$); $I_n$ denotes the $n\times n$ identity matrix.
 To denote the conditional probability of an event
$\Omega$ given $\Delta$, we write 
$\Prob\{\Omega \,|\, \Delta\}$. 
The expected value of a random variable $\mu$ given
 $\Delta$, is denoted via  $\E\{\mu  \,|\, \Delta \}$, whereas for the unconditional
 expectation  we write $\E\{\mu\}$. A real Gaussian random variable $\mu$, with
 mean $\nu$ and  covariance
 matrix $\Gamma$ is denoted by $\mu \sim\Gauss(\nu, \Gamma)$. 

\section{Sensor Network Architecture}
\label{sec:state-estim-over}
We consider uncontrolled  linear time-varying $n$-dimensional systems of the form:
\begin{equation}
  \label{eq:1}
  x(k + 1)= A(k)x(k) + w(k), \quad k\in\Nset_0,
\end{equation}
where 
$x(0)\sim\Gauss(x_0,P_0)$, with $x_0^Tx_0\in\Rset_{\geq 0}$, $\|P_0\| \in\Rset_{\geq 0}$. The driving noise process
  $\{w\}_{\Nset_0}$ is independent  with $w(k)\sim\Gauss(0,Q(k))$, for all $k \in \Nset_0$.

\par To  estimate the system state sequence $\{x\}_{\Nset_0}$, a collection
of $M$ wireless sensors $\{S_1,\dots,S_M\}$ is used. Each sensor
provides  
a  noisy measurement sequence $\{y_m\}_{\Nset_0}$ of the form
\begin{equation}
  \label{eq:2a}
  {y}_m(k) = C_mx(k) + v_m(k), \quad m\in \{1,\dots,M\},
\end{equation}
with $C_m\in\Rset^{l_m\times n}$, $l_m\in\Nset$. In~\eqref{eq:2a}, the
measurement noise processes
 $\{v_m\}_{\Nset_0}$ are independent, with each 
$v_m(k)\sim\Gauss (0,R_m(k))$.\footnote{In addition to measurement noise, $v_m(k)$ may also describe quantization
effects, which we model as Gaussian and introducing possibly time-varying
distortion, see also\cite{queahl10,queost11,minfra09}.} Throughout this work, we assume
that $\{A\}_{\Nset_0}$, $\{Q\}_{\Nset_0}$ and 
$\{R_m\}_{\Nset_0}$, $m\in\{1,\dots,M\}$ are deterministic and bounded sequences, known at the gateway.

\par The $M$ (possibly vector) measurements in~(\ref{eq:2a}) are to be transmitted  via  wireless
links to a single 
gateway (or fusion centre), denoted $S_0$. Since the links are wireless, and
independent on whether the medium access protocol adopted gives deterministic or
random access (or a combination thereof  as in the widely used IEEE 802.15.4
communication standard, see\cite[Ch.4]{park11}), some
measurements will be dropped by the network.
The received measurement values are used
to remotely estimate the state of the 
system~(\ref{eq:1}).  The present work seeks to gain
understanding on the impact of measurement dropouts on estimation performance.
For that purpose, we will focus on a sensor network architecture where
all nodes in 
the network, apart 
from 
the gateway, are sensing nodes.\footnote{It is worth
noting, however, that relay nodes can be modeled as a sensor node with an
all-zero 
observation matrix, see~\eqref{eq:2a}.}

\par As in other works, e.g.,\cite{mogar11}, we will assume that the network is much faster than the
process~\eqref{eq:1} and will therefore neglect any delays experienced by the
data when traveling through the network. Each sensor node aggregates its own
current measurement to
the received packets from incoming nodes and transmits the resulting packet to a single destination
node. This reduces the energy used for listening. Sensor nodes do not buffer old 
data. Thus, the measurements received by the gateway
at time $k$ are a subset of $\{y_1(k),y_2(k),\dots,y_M(k)\}$.

\par It is convenient to describe the network by means of a directed graph, with vertices
$\{S_0,\dots , S_M\}$, and edges associated with the  wireless links. Each sensor $S_m$
transmits to a single node, called its \emph{parent} and henceforth denoted via 
$\parent (S_m)$. Thus, the graph constitutes a
directed tree graph with root $S_0$, see also\cite{mogar11}. Each
sensor node
$S_m$ has a single outgoing edge, say, 
\begin{equation*}
\Ecal_m=\big(S_m,\parent (S_m)\big),\quad  m\in\{1,\dots,M\}.
\end{equation*}
 Furthermore, there exists a
unique path from 
each $S_m$ to the gateway. We  denote this path  by $\path(S_m)$,  its
edges by 
$\edge (\path(S_m))$ and its nodes by $\node (\path(S_m))$.


\begin{figure}[t]
  \centering
  \input{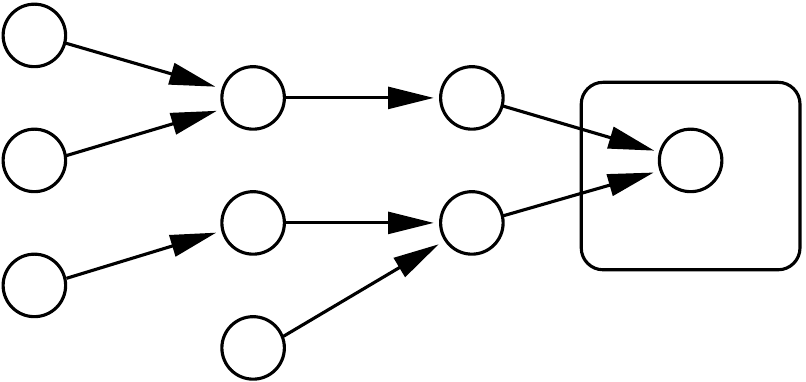_t}
  \caption{Sensor network tree with $9$ nodes and $8$ edges.}
  \label{fig:tree_tac12}
\end{figure}

\begin{ex}
  In the sensor network  depicted in
  Fig.~\ref{fig:tree_tac12}, the packet transmitted
by  $S_3$  at time $k\in\Nset_0$ contains $y_3(k)$ and a subset of
$\{y_6(k),y_7(k)\}$. Furthermore, we have 
$ \parent  (S_4)=\parent (S_5)=S_2$, $\node (\path(S_7)) =
\{S_7,S_3,S_1,S_0\}$, and 
$\edge(\path(S_7))=\{\Ecal_7,\Ecal_3,\Ecal_1\}$ $=\{(S_7,S_3)$, $(S_3,S_1),(S_1,S_0)\}$.  \hfs
\end{ex}

Since the  links used to convey measurements from the 
 sensors to the gateway are wireless, transmission
errors are likely to occur. To study this aspect,  in the
following section we introduce a  network model, which is physically motivated
and leads to tractable analysis of the overall system

\section{Sensor Network Connectivity Model}
\label{sec:transmission-effects}
Channel gains are, in general,  affected by  path-loss, and
shadow and small-scale fading; see, e.g.,
\cite{proaki95,goldsm05,ghamos11}. Path-loss is solely distance dependent and will 
therefore be  constant in most industrial applications. Shadow
fading is 
caused by large (and possibly slowly moving) objects obstructing the
radio link  and can therefore be correlated in time and space; see also\cite{gudmun91,agrpat09}. If sensors are
close  
to each other, then shadow fading may cause correlations between the
individual link gains.  Small-scale fading is due to local scattering  in multi-paths and
 is commonly  modeled via uncorrelated channel gain
distributions.  

\subsection{Network Fading Model}
\label{sec:network-fading-model}
To model the  sensor network fading channels, we will make use of $M+1$ random
variables: The
\emph{network state} process $\{\Xi\}_{\Nset_0}$, and  $M$  channel (power) gains
$\{h_m\}_{\Nset_0}$.  

\par  The  network state process  serves to capture shadow fading. It is a discrete process, i.e., we have 
\begin{equation}
  \label{eq:15}
  \Xi(k)\in \Bset
  \eq 
  \{1,2,\dots ,|\Bset|\},\quad \forall k\in\Nset_0,
\end{equation}
where the finite state space $\Bset$  models
different configurations of the overall physical environment (such as positions
of mobile objects). 
 To incorporate temporal
correlations, throughout the first part of this work, we will assume that
$\{\Xi\}_{\Nset_0}$ is a Markov chain, as stated in Assumption~\ref{ass:Markov} below. In Section~\ref{sec:alternative-model}, we will
extend this model by incorporating arbitrary holding times on system states.
\begin{ass}
\label{ass:Markov}
 The network states $\{\Xi\}_{\Nset_0}$  form a discrete (time-homogeneous)
  Markov chain, with transition probabilities
  \begin{equation}
    \label{eq:5}
    p_{ij} = \Prob\big\{\Xi(k+1) = j \, \big| \, \Xi(k) = i\big\},\quad \forall i,j \in
    \Bset, \;k\in\Nset_0,
  \end{equation}
  see, e.g., \cite{bremau99,cinlar75}. \hfs
\end{ass}

\par Each of the channel gains, $\{h_m\}_{\Nset_0}$, $m\in\{1,2,\dots,M\}$,
corresponds to the power gain from node
$S_m$ to its parent node, $\parent(S_m)$. We consider 
block-fading, which is a 
common information theoretic model for fading wireless channels where the
channel power gains remain invariant over a block (shorter than the coherence 
time of the channel) and may change from block to
block\cite{caitar99,bergal02}.\footnote{This 
model is appropriate for 
 many practical applications, and was considered, e.g., also 
in\cite{mosmur09}.}
Small-scale fading is incorporated into our model by allowing channel gains 
 at  different time slots and also gains of different links
to be conditionally independent for a given network state.  More formally, if
$\Ecal_{l}\not =  
\Ecal_{m}$ or $k \not = \ell$, then the channel
gain distributions are time-homogeneous and satisfy 
\begin{equation}
\label{eq:27}
\begin{split}
  \Prob \big\{ &h_{{l}}(k)\leq a_1 ,h_{m}(\ell)\leq a_2 \, \big| \, 
  \Xi(k)  = j, \Xi(\ell)  = i\big\}\\
  &= \Prob \{ h_{l}(k)\leq a_1  \, | \, \Xi(k)  = j\}\\
&\qquad\times \Prob \{h_{m}(\ell)\leq a_2 \, | \, \Xi(\ell)  = 
  i\},
\end{split}
\end{equation}
for all $a_1,a_2 \in \Rset_{\geq 0}$ and  all  $i,j \in\Bset$. Note that
in~(\ref{eq:27}) we do
not limit our attention to particular fading distributions. For example, our
model could use Rayleigh, Rician or Nakagami
distributions\cite{goldsm05}. Furthermore, individual links are allowed to
switch between different distribution classes.  
It is important to emphasize that, given~(\ref{eq:27}), the  network state process serves to describe expected channel
gains. Our framework allows for spatial correlations between channel
gains of individual links. It  also incorporates temporal correlations, as per the Markov
chain model of the network state process. For time-varying environments, our
model will be more meaningful than simply taking long term averages of link
gains; see also recent experimental studies documented in\cite{bjonet11}.

\subsection{Packet Loss}
\label{sec:packet-dropouts}
We will assume that each data packet is either
received perfectly or is completely lost and unavailable to the receiver;
cf.,\cite{mosmur09}. Transmission effects are modeled  via random packet
dropouts at the individual links of the network. We, thus, introduce the
binary stochastic communication success processes
$\{\gamma_m \}_{\Nset_0}$, $m \in\{1,2,\dots,M\}$, 
where
\begin{equation}
  \label{eq:3}
  \gamma_{m} (k) =
  \begin{cases}
    1&\text{if  at time $k$ transmission via $\Ecal_m$ is successful,}\\
  0& \text{otherwise.}
  \end{cases}
\end{equation}
The distributions of  the processes
$\{\gamma_m \}_{\Nset_0}$  are
determined by channel gains, bit-rates and power levels. To be more specific,
for each link  
$\Ecal_m = (S_m,\parent(S_m))$, $m\in\{1,\dots,M\}$, it holds that
\begin{equation}
  \label{eq:10}
  \begin{split}
    \Prob&\big\{\gamma_m (k)= 1 \,\big|\,h_m(k)=h, u_{m}(k)=u, b_m(k)=b \big\}\\
    &=f_{m}(hu,b),
  \end{split}
\end{equation}
where $u_{m}(k)$ denotes the power used by the radio power
amplifier of 
sensor $S_m$,  and $b_m(k)$  the corresponding bit-rate. In~(\ref{eq:10}),
$f_{m}\colon \Rset_{\geq 0}  \times B 
\rightarrow [0,1]$ where  $B$ is the set of allowable bit-rates. The function
$f_{m}$ is monotonically increasing in the first argument (the received
signal power) and
monotonically decreasing in the second argument (the bit-rate). Its specific
form depends on the modulation used by each node $S_m$, see, e.g.,
\cite{proaki95,queahl10}.\footnote{Fast retransmissions of individual links can
  be included into our 
  framework by simply replacing $f_m(hu,b)$ with $1-\big(1-f_m(hu,b)\big)^L$, where $L$ is the
  maximum number of retransmissions allowed by the protocol and delay constraint.}

\par In view of~\eqref{eq:10},  power and bit-rate control can be used to  counteract fading
effects; see \cite{panver07,queahl10,queahl12}. Throughout this work
we  allow transmission power levels and bit-rates to depend upon the channel
gains and the network state. We, thus, introduce the following standing assumption:

\begin{ass}
\label{ass:policies}
Power and bit-rate control laws are  of the form
\begin{equation}
  \label{eq:41}
  \begin{split}
    u_{m}(k) &= \kappa_m(\Xi(k),h_{1}(k), \dots,h_{M}(k)),\\
    b_m(k)  &= \eta_m(\Xi(k),h_{1}(k), \dots,h_{M}(k)),
  \end{split}
\end{equation}
where $\kappa_m$ and $\eta_m$, $m \in\{1,2,\dots,M\}$, are non-linear mappings. \hfs
\end{ass}
Particular cases of~\eqref{eq:41} include
the use of fixed power levels and bit-rates, 
fixed gain controllers with saturated outputs,
$u_{m}(k)=\mathrm{sat}\big(\mathcal{K}_m / h_{m}(k)\big)$; and also the
various 
power allocation policies studied in\cite{bergal02}.

\par A key feature of the network model presented above and the power and bit-rate controller
class considered is that, \emph{when conditioned 
  upon 
  the network state} $\Xi$, the  link transmission success processes are
independent in time and of each other. For further reference, we will denote the associated success
probabilities via
\begin{equation}
  \label{eq:43}
  {\phi}_{m|j}\eq \Prob\big\{\gamma_{m}(k) = 1 \,\big|\, \Xi(k) =j
  \big\},\,  m\in \{1,\dots,M\},j\in\Bset
\end{equation}
and note that for power and bit-rate control laws~\eqref{eq:41}, we have
\begin{equation}
  \label{eq:44}
  \begin{split}
    {\phi}_{m|j} =\E \big\{f_{m} &\big(
    h_{m}\cdot\kappa_m(\Xi ,h_{1},\dots, h_{M}),\\
    &\eta_m(\Xi,h_{1}, \dots,h_{M}) \big)\,\big|\,\Xi =j
    \big\}.
  \end{split}
\end{equation}
 Thus, for given control and bit-rate policies,
 calculating ${\phi}_{m|j}$ involves
simply taking expectation with respect to the conditional
distribution of the channel gains 
 given the network  state $\Xi=j$, see~(\ref{eq:27}).

\begin{figure}[t]
  \centering
  \input{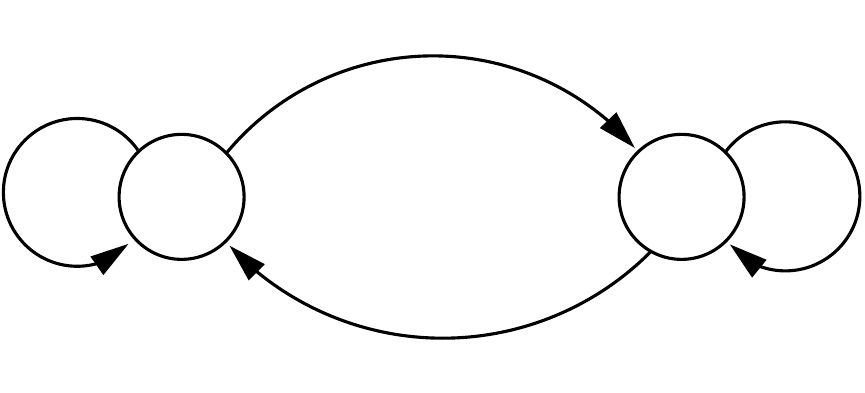_t}
  \caption{Sensor network connectivity model with two states, correlated as per
    Assumption~\ref{ass:Markov}; see Example~\ref{ex:independent}.} 
  \label{fig:markov_tac12}
\end{figure}

\begin{ex}
\label{ex:independent}
 Suppose that there are only two network states: In state $\Xi=1$,
all $M$ links are in good condition (with low dropout probabilities). When $\Xi(k) = 2$,
some of the links are obstructed by a large object, thus, having  very
small  expected channel gains.  Transition between the two network states  is random and
obeys the Markov chain model~(\ref{eq:5}),  see Fig.~\ref{fig:markov_tac12}. \hfs
\end{ex}


 It is important to emphasize that the network state determines the
   distribution of the $M$  channel gains and  thereby the distribution of the link
   success probabilities in~(\ref{eq:43}). Despite Assumption~\ref{ass:Markov}, we
   do not require that the channel gains $\{h_m\}_{\Nset_0}$ or the dropout
   processes $\{\gamma_m\}_{\Nset_0}$ be Markovian.  
   Our model encompasses, as special cases, i.i.d.\
   transmissions\cite{sinsch04}, the Gilbert-Elliot Model \cite{gilber60,ellio63}, 
   Markovian models for the dropout processes
   \cite{xiexie08,huadey07,smisei03}, and channel gains described by a finite
   Markov chain\cite{bergal02}. 
 In the one-link case, and provided Assumption~\ref{ass:Markov} holds,  the 
   model presented is mathematically equivalent to the hidden Markov chain model
   of\cite{fleran04}. A key difference to\cite{fleran04}, however, is that the
   present model is physically motivated and allows one to incorporate the effect
   of power and 
   bit-rate control in an straightforward manner. In
   Section~\ref{sec:one-sensor-case} we will further examine  these
   relationships.

\begin{rem}
  The design of power and bit-rate control laws, and also how to estimate
  network states from dropout observations, lies outside the scope of the present
  work. References on the power and bit-rate control problem for state
  estimation with wireless links  include\cite{queahl10,queahl12}. To estimate network states, one
  can adapt hidden Markov chain estimation techniques, as described, e.g., in
  \cite{rabine89,barlim08} and used in\cite{bjonet11}. \hfs
\end{rem}

\section{State Estimation over a Sensor Network Tree with 
Packet Drop-outs} 
\label{sec:state-estim-over-1}
The purpose of the sensor network architecture considered is to estimate the
state 
of the system~\eqref{eq:1} centrally at the gateway by using the
measurements received from 
 the sensors $\{S_1,S_2,\dots,S_M\}$, see Fig.~\ref{fig:tree_tac12}.   As we have seen in
 Section~\ref{sec:transmission-effects}, fading channels 
will introduce random packet loss. 
 From an estimation point of view, it is
convenient to  
introduce the binary sensor-to-gateway connectivity processes
$\{\theta_{m}\}_{\Nset_0}$, 
 $m\in \{1,\dots,M\}$,  where
\begin{equation}
  \label{eq:7a}
  \theta_{m} (k) =
  \begin{cases}
    1&\text{if at time $k$ transmission via}\\
    &\qquad\text{$\path(S_m)$ is successful,}\\
  0& \text{otherwise}.
  \end{cases}
\end{equation}
\begin{rem}
  Since we assume that the network does not introduce
  any delays, 
we have 
\begin{equation*}
  \theta_{m} (k) = \prod_{\Ecal_i \in \edge(\path(S_m))}
  \gamma_{i} (k),\quad \forall m \in \{1,\dots,M\}.
\end{equation*}
Furthermore, the conditional distributions of
  $\{\theta_{m}\}$ given the network states can  be written in  terms of the
  individual link functions $ \phi_{i|j}$ introduced in~(\ref{eq:43}) as follows:
  \begin{equation}
\label{eq:20}
    \begin{split}
      \Prob&\{\theta_{m} (k)= 1 \,\big|\, \Xi(k)=j\} \\
      &= \prod_{\Ecal_i \in \edge(\path (S_m))}
      \Prob\{\gamma_{i} (k)= 1 \,\big|\, \Xi(k)=j\}\\
      &= \prod_{\Ecal_i \in \edge(\path (S_m))} \phi_{i|j}.
    \end{split}
  \end{equation}
Note, however, that if  $S_{m}\in\node(\path({S_{l}}))$, with $m\not = l$, then,
in general,
\begin{equation*}
  \begin{split}
    \Prob&\{\theta_{m} (k)=1, \theta_{l} (k)= 1 \,|\, \Xi(k)=j\}\\
    &\!\!\! \not =\Prob\{\theta_{m} (k)= 1 \,|\, \Xi(k)=j\}\times
    \Prob\{\theta_{l} (k)= 1 \,|\, \Xi(k)=j\},
  \end{split}
\end{equation*}
despite the fact that channel gain distributions satisfy~\eqref{eq:27}.\hfs
\end{rem}

We will assume that the packets transmitted from the sensors to the gateway
incorporate error detection coding, see, e.g., \cite{proaki95}, and that the
gateway 
knows, whether received packets
are error-free or not.
 Thus, the information available for state estimation at the gateway at time $k$
 is given by  
 \begin{equation}
   \label{eq:13}
   \Ical(k) = \Big\{ \{\theta_{1}\}_0^k,\dots, \{\theta_{M}\}_0^k,
   \{y\}_0^k \Big\},
 \end{equation}
where
\begin{equation*}
   y (k) \eq
   \begin{bmatrix}
    \theta_{1}(k) y_{1}(k)\\\theta_{2}(k) y_{2}(k) \\ \vdots
    \\\theta_{M}(k) y_{M}(k)
   \end{bmatrix}\!,\quad k \in \Nset_0.
\end{equation*}


\par With power and bit-rate control laws of the
form~\eqref{eq:41} 
and given 
the network fading model adopted, 
the dropout 
realizations in~\eqref{eq:13} do not convey information about the system state
$\{x\}_{\Nset_0}$. Since we have assumed that the network does not introduce any
delays, it turns out that state estimation in
the  
wireless sensor network configuration 
studied amounts 
to sampling the system~\eqref{eq:1} using the time-varying
(stochastic) 
observation 
matrix 
\begin{equation}
  \label{eq:42}
  C(k) \eq \begin{bmatrix}
    \theta_{1}(k) {C}_1 \\ \theta_{2}(k) {C}_2 \\  \vdots  \\
 \theta_{M}(k) {C}_M
   \end{bmatrix}\!,\quad k \in \Nset_0.
\end{equation}
Consequently, the
conditional  distribution of  $x(k)$
given $\Ical(k-1)$ 
 is Gaussian. The conditional mean of $x(k)$,
 \begin{equation*}
    \hat{x}(k | k-1) \eq\E \big\{x(k)  \,\big|\, \Ical(k-1)
    \big\} 
 \end{equation*}
and the associated  estimator   covariance matrix, 
 \begin{equation}
\label{eq:22}
{P}(k\,|\,k-1) \eq\E \big\{\epsilon(k)\epsilon(k)^T  \big\} 
\end{equation}
with
\begin{equation}
  \label{eq:21}
  \epsilon(k)\eq x(k) - \hat{x}(k | k-1),
\end{equation}
satisfy the Kalman filter recursions (see, e.g., \cite{andmoo79}):
\begin{equation}
  \label{eq:11}
  \begin{split}
    \hat{x}(k + 1|k) &= A(k)\hat{x}(k|k-1) \\
    &\qquad + K(k)
    \big(y(k) - C(k) \hat{x}(k|k-1)\big)\\
    P(k+1|k) & = A(k) P(k|k-1) A(k)^T +Q(k)\\
    &\qquad -  K(k) C(k) P(k|k-1) A(k)^T
  \end{split}
\end{equation}
where
\begin{equation*}
  \begin{split}
    R(k)&\eq \diag \big(R_1(k),R_2(k),\dots,R_M(k)\big),\\
    K(k)&\eq A(k)P(k | k-1) C(k)^T\\
    &\quad\cdot\big(C(k)P(k | k-1)C(k)^T +R(k) \big)^{-1},
  \end{split}
\end{equation*}
and with initial values  $P(0 |-1)=P_0$ and $\hat{x}(0 |-1)=x_0$.

\par It follows directly from~(\ref{eq:42}) that $C(k)$ takes one of $2^M$ possible
values. The probability distribution of $C(k)$ depends upon the current channel
gains, the bit-rates and the power levels used; see~(\ref{eq:44}) and (\ref{eq:20}).
Thus, $\{C\}_{\Nset_0}$ is a random process, the
 recursion~(\ref{eq:11}) is stochastic and the error covariance process
$\{P(k+1|k)\}_{k\in\Nset_0}$ is stochastic.

\par A key  difference of our approach when compared to that in
\cite{shicap10,shi09b}, is that we consider
packet dropouts. Thus, in case of open loop unstable systems~(\ref{eq:1}), the
estimator  covariance matrix will, in general not be 
stationary. The situation is akin to that encountered in the
context of state estimation over lossy communication links with constant dropout
probabilities; see, 
e.g., \cite{schsin07}, or also where  dropout processes are (semi-)Markovian; see, e.g.,
\cite{xiexie08,huadey07,smisei03,youfu11,censi11}. In 
the case under study in the present work, the plant model and transmission
success probabilities 
are time
varying, and channel gains are correlated between each other. Thus, the results
of the above articles 
cannot be applied directly. In the following section, we will take a closer look
at stochastic stability of the Kalman filter~(\ref{eq:11}).

\section{Stability Analysis for Markovian Network States}
\label{sec:stability-analysis}
If the system~(\ref{eq:1}) is unstable, then due to packet dropouts, the covariance matrix process
$\{P(k+1|k)\}_{k\in\Nset_0}$ in~(\ref{eq:11}) will, in general, not
converge to a fixed value and may, at times, diverge, thereby indicating poor
performance of the Kalman filter.  As shown in
\cite{schsin07}, this type of behaviour occurs even in the simplest scenario, where
only one sensor is used and dropout probabilities are i.i.d. 
We will next study
stability of the Kalman filter for the sensor network 
model presented in Section~\ref{sec:transmission-effects}. 
Towards that goal we adopt a stochastic stability notion, which captures the
fact that, with dropouts, the best one can hope for is that the estimator
covariance matrix   be bounded. More precisely, we will focus on the trace of
the covariance matrix, which by~(\ref{eq:22}) quantifies the mean square of the
estimation error, $  \tr P(k |k-1) = \E\big\{ \|\epsilon(k)\|^2\big\}$,
and adopt the
following definition, adapted from \cite{tarras76}:

\begin{defi}
\label{def:expbound}
  The   Kalman filter in~(\ref{eq:11}) is said to be exponentially
  bounded, if there exist finite constants $\alpha$ and $\beta$ and
  $\rho \in [0,1)$ such that:
  \begin{equation}
\label{eq:14}
  \E \big\{ \tr P(k |k-1) 
\big\} \leq \alpha \rho^k  + \beta ,\quad \forall k\in
  \Nset_0 .
\end{equation}

\end{defi}
\begin{rem}
Since $P(k|k-1)\succeq 0$, it directly follows that $\tr P(k |k-1)\geq \|P(k|k-1)\|=\sqrt{\max \eigs(P(k |k-1))}$ for
all $k\in \Nset_0$. Thus, exponential
 boundedness of the Kalman filter implies boundedness of $\E\{P(k|k-1)\}$ as studied,
 for example, in\cite{sinsch04,plabul09,liugol04} and also covariance stability, i.e., $\E
 \{\|P(k+1|k)\|\}<\infty$, for all $k\in 
  \Nset_0$, see, e.g.,\cite{xiexie08,huadey07}. Exponential
  boundedness  has also been used in \cite{quenes12a} for the analysis of a
  class of networked 
control systems with Markovian packet dropouts and non-vanishing disturbances.\hfs
\end{rem}

\par Our  analysis  makes use of the  process 
 $\{r\}_{\Nset_0}$, where  
\begin{equation}
  \label{eq:12}
  r(k) \eq
\begin{cases}
  1&\text{if $C(k)$ is full rank,}\\
  0& \text{otherwise.}
  \end{cases}
\end{equation} 
We also introduce
\begin{equation}
\label{eq:50}
  \nu_i \eq \Prob\{r(k)= 0 \,|\, \Xi(k-1)=i\}, \quad i\in\Bset,
\end{equation}
which denotes the  probability of $C(k)$ not being  full rank,
  given  $\Xi(k-1)=i$. Note that, by the law of total probabilities, we have
\begin{equation}
\label{eq:18}
  \begin{split}
  \nu_i &= \sum_{j\in\Bset} \Prob\{r(k)= 0 \,|\, \Xi(k-1)=i,\Xi(k)=j\}\\
  &\qquad\qquad\times\Prob\{\Xi(k)=j\,|\,\Xi(k-1)=i\}\\
  &= \sum_{j\in\Bset} p_{ij}\Prob\{r(k)= 0 \,|\, \Xi(k)=j\},\quad \forall i\in\Bset.
\end{split}
\end{equation}

Clearly, $r(k)$ is a (Boolean) function of the individual link success outcomes
$\gamma_m(k)$, $m\in\{1,\dots,M\}$ and thereby depends upon the channel gains, and
bit-rate and power control laws, see~(\ref{eq:44}). By the discussion in
Section~\ref{sec:transmission-effects}, it is easy to see that (provided
Assumption~\ref{ass:policies} holds),\ $r(k)$ is
temporarily independent, when
conditioned upon the network state $\Xi(k)$. Furthermore, and as with the
sensor-to-gateway connectivity processes $\theta_m(k)$,
the conditional distribution $\Prob\{r(k) \,|\, \Xi(k)\}$ can be written in terms of the
functions $\phi_{m|j}$ introduced in~\eqref{eq:43}, see also
Example~\ref{ex:morgan} included at the end of this section.

\par The following theorem gives a sufficient condition for  stability
of the Kalman filter used for state estimation over a sensor network with
Markovian channel states. 

\begin{thm}
\label{thm:multiple_sensors}
Suppose that Assumptions~\ref{ass:Markov} and~\ref{ass:policies} hold. If there exists $\rho \in [0,1)$ such that
  \begin{equation}
    \label{eq:18b}
   \max_{(i,k)\in\Bset\times \Nset_0}
\nu_i \|A(k)\|^2 \leq \rho,
  \end{equation}
then the  Kalman filter with the channel gain and power and bit-rate control
model  described in 
  Section~\ref{sec:transmission-effects} is exponentially bounded.
\end{thm}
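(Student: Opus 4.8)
The plan is to bound $\tr P(k|k-1)$ pathwise by the number of consecutive recent instants on which the stochastic observation matrix $C(k)$ of~\eqref{eq:42} lacks full column rank, and then to bound the probability of such a run using Assumption~\ref{ass:Markov} together with the fact, noted just before the theorem, that conditioned on $\{\Xi\}_{\Nset_0}$ the indicators $r(k)$ are independent and each $r(k)$ depends on the network‑state path only through $\Xi(k)$.

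First I would extract two consequences of the Riccati recursion~\eqref{eq:11}. Put $\bar a\eq\sup_k\|A(k)\|^2$ and $\bar q\eq\sup_k\tr Q(k)$, both finite since $\{A\}_{\Nset_0},\{Q\}_{\Nset_0},\{R_m\}_{\Nset_0}$ are bounded; assume $\bar a>1$, the complementary case needing only minor modifications. Using $P(k|k)\preceq P(k|k-1)$ and $\tr\bigl(A(k)MA(k)^T\bigr)\le\|A(k)\|^2\tr M$ for $M\succeq 0$, one always has $\tr P(k+1|k)\le\bar a\,\tr P(k|k-1)+\bar q$. When $r(k)=1$, the information form of the measurement update gives $P(k|k)\preceq\bigl(C(k)^TR(k)^{-1}C(k)\bigr)^{-1}$; since $C(k)$ takes one of finitely many full‑column‑rank values and $R(k)$ is bounded (and invertible), this yields a uniform bound $P(k|k)\preceq\bar cI_n$, hence $\tr P(k+1|k)\le\bar a\,n\bar c+\bar q\eq\bar P$. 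Now let $\sigma(k)$ be the number of steps elapsed since the last instant $j\le k-1$ with $r(j)=1$, with the convention that the initial time counts as such an instant, so $0\le\sigma(k)\le k$. Iterating the two bounds above from that instant gives a deterministic estimate $\tr P(k|k-1)\le D\,\bar a^{\sigma(k)}$ with $D\eq\max\{\bar P,\tr P_0\}+\bar q/(\bar a-1)$.

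The crux is then the estimate $\Prob\{\sigma(k)=m\}\le\Prob\{r(k-1)=0,\dots,r(k-m)=0\}$. With $g_\ell\eq\Prob\{r(k)=0\mid\Xi(k)=\ell\}$, conditioning on the network‑state path and using the stated conditional independence gives $\Prob\{r(k-m)=0,\dots,r(k-1)=0\}=\E\bigl\{\prod_{j=k-m}^{k-1}g_{\Xi(j)}\bigr\}$, which by the Markov property equals $\sum_i\Prob\{\Xi(k-m-1)=i\}\,[\widetilde P^{\,m}\mathbf{1}]_i$, where $\widetilde P$ has entries $\widetilde P_{i\ell}\eq p_{i\ell}g_\ell$ and $\mathbf{1}$ is the all‑ones vector. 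Since $[\widetilde P\mathbf{1}]_i=\sum_\ell p_{i\ell}g_\ell=\nu_i$ by~\eqref{eq:18}, a one‑line induction gives $[\widetilde P^{\,m}\mathbf{1}]_i\le\nu_{\max}^{\,m}$ with $\nu_{\max}\eq\max_{i\in\Bset}\nu_i$; the boundary step at $k=0$ (no conditioning state available) costs one power, so $\Prob\{\sigma(k)=m\}\le\nu_{\max}^{\,m}$ for $1\le m<k$ and $\Prob\{\sigma(k)=k\}\le\nu_{\max}^{\,k-1}$. Assembling, with $\nu_{\max}>0$ (else $r\equiv1$ a.s.\ and $\tr P(k|k-1)\le\bar P$ trivially) and noting that~\eqref{eq:18b} reads exactly $\bar a\,\nu_{\max}\le\rho<1$,
\[
\E\{\tr P(k|k-1)\}\le D\sum_{m=0}^{k}\bar a^{m}\Prob\{\sigma(k)=m\}\le D\Bigl(\frac{\rho^{k}}{\nu_{\max}}+\sum_{m\ge0}\rho^{m}\Bigr)=\frac{D}{\nu_{\max}}\,\rho^{k}+\frac{D}{1-\rho},
\]
which is of the form~\eqref{eq:14} with $\alpha=D/\nu_{\max}$ and $\beta=D/(1-\rho)$.

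The main obstacle is the probabilistic step: one must use the precise conditional‑independence structure of $\{r\}_{\Nset_0}$ — guaranteed by Assumptions~\ref{ass:Markov}--\ref{ass:policies} and the fading model of Section~\ref{sec:transmission-effects} — to make the joint run probability telescope through $\widetilde P$ and to relate $[\widetilde P\mathbf{1}]_i$ to $\nu_i$ via~\eqref{eq:18}; a crude union bound over runs would not reproduce the geometric rate $\rho^k$. Getting the boundary behaviour at $k=0$ right and the degenerate cases $\bar a\le1$ and $\nu_{\max}\in\{0,1\}$ are minor points that should nonetheless be checked. A secondary technical issue is the uniform bound $P(k|k)\preceq\bar cI_n$ under $r(k)=1$, which is where full column rank of $C(k)$, finiteness of the configuration set, and boundedness/invertibility of $R(k)$ enter.
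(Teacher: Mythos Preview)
Your argument is correct and takes a genuinely different route from the paper's. The paper proceeds via a one-step stochastic Lyapunov drift: it first shows that the augmented process $Z(k)=(P(k|k-1),\Xi(k-1))$ is Markov (Lemma~\ref{lem:markov}), then establishes the conditional bound $\E\{V_{k+1}\mid Z(k)=(P,i)\}\le W+\nu_i\bigl(\|A(k)\|^2\tr P+\tr Q(k)\bigr)$ by splitting on $r(k)\in\{0,1\}$ (Lemma~\ref{lem:bound})---for $r(k)=1$ it compares with the one-shot least-squares predictor $A(k)(C(k)^TC(k))^{-1}C(k)^Ty(k)$, for $r(k)=0$ with open-loop prediction---and finally iterates via \cite[Prop.~3.2]{meyn89}. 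You instead bound $\tr P(k|k-1)$ pathwise by the length of the current run of non-full-rank observation matrices and control the run-length tail directly through the sub-stochastic matrix $\widetilde P$, which is precisely where your identification $[\widetilde P\mathbf 1]_i=\nu_i$ reproduces~\eqref{eq:18}.

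The paper's route is shorter and avoids the boundary bookkeeping you flag (the step at $k=0$, the degenerate cases $\bar a\le 1$ and $\nu_{\max}\in\{0,1\}$); it also transplants cleanly to the semi-Markov setting of Section~\ref{sec:alternative-model} by passing to the embedded chain. Your route is more elementary---no appeal to an external drift criterion---and makes explicit that the quantity governing stability is the row sum of $\widetilde P$, which is exactly $\nu_i$. One small technical point worth tightening: your information-form bound $P(k|k)\preceq(C(k)^TR(k)^{-1}C(k))^{-1}$ needs $R(k)^{-1}$ uniformly bounded, whereas the paper only assumes $\{R\}_{\Nset_0}$ bounded; switching to the paper's least-squares comparison estimator, whose error covariance involves $R(k)$ rather than $R(k)^{-1}$, removes this extra hypothesis.
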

\begin{proof}
See Appendix~\ref{sec:netw-with-mult-1}.
\end{proof}

Our result establishes a sufficient condition for  exponential boundedness  of
the  estimator covariance matrix when the channel gains
of the $M$ links
are governed by the radio environment model described in
Section~\ref{sec:transmission-effects}. The  condition requires that $C(k)$ be
full rank if all measurements are received and is stated in terms of a
bound which involves the spectral norm of the
system matrices $A(k)$, the transition probabilities of the channel state $\Xi$,
and the conditional probabilities of $\{r\}_{\Nset_0}$. The latter are determined by 
the individual conditional transmission success probabilities $\phi_{m|j}$, and
can therefore be influenced by designing the  power and bit-rate control
policies, see~\eqref{eq:44}. The 
situation investigated in the present work generalizes that studied for the simpler case of having independent channel gains
in our recent paper \cite{queahl12}.

\par Before turning our attention to a particular case, namely when the network has
only one sensor,  we will first give an example which illustrates how to evaluate $\Prob\{r(k)\,|\, \Xi(k)\}$.

\begin{figure}[t]
  \centering
  \input{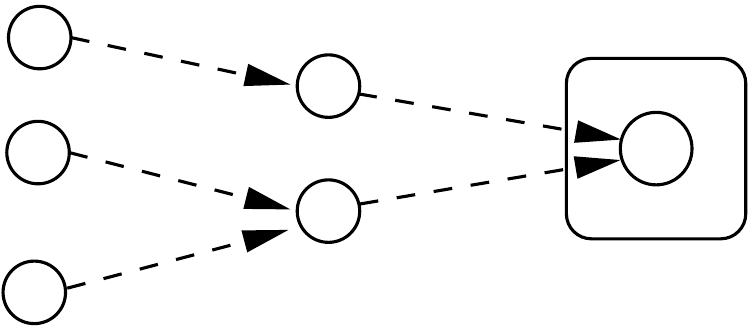_t}
  \caption{Sensor network tree with $M=6$ nodes used in
    Example~\ref{ex:morgan}. Sensors $S_3$, $S_4$ and $S_5$ transmit their own
    measurements. At each time $k$,  $S_1$ transmits $y_1(k)$ and, if received, also
    $y_3(k)$. Likewise, $S_2$ transmits $y_2(k)$ and a subset of
    $\{y_4(k),y_5(k)\}$, depending upon the transmission outcomes of links
    $\mathcal{E}_4$ and  $\mathcal{E}_5.$}
  \label{fig:tree2_tac12}
\end{figure}

\begin{ex}
\label{ex:morgan}
  Consider the subgraph of the sensor network topology of
  Fig.~\ref{fig:tree_tac12} having vertices $\{S_0,S_1,\dots,S_5\}$, see Fig.~\ref{fig:tree2_tac12}. Suppose that for $C(k)$ to be 
  full rank (at least) three of the measurements $\{y_1(k),
  y_2(k),\dots,y_5(k)\}$ need to be received at the gateway. Then $r(k)=1$ if
  and only if 
\begin{equation*}
  \begin{bmatrix}
    \gamma_1(k)& \gamma_2(k) & \dots &\gamma_5(k)
  \end{bmatrix}^T\in
      \Jset,
    \end{equation*}
    where 
\begin{equation*}
 \Jset\eq\left\{
    \begin{bmatrix}
      0\\1\\0\\1\\1
    \end{bmatrix}\!,
\begin{bmatrix}
      0\\ 1\\1 \\1 \\1 
    \end{bmatrix}\!,
\begin{bmatrix}
      1\\ 1\\ 0\\ 0\\1 
    \end{bmatrix}\!,
\begin{bmatrix}
      1\\ 1\\ 0\\ 1\\ 0
    \end{bmatrix}\!,
\begin{bmatrix}
     1 \\ 1\\ 0\\1 \\ 1
    \end{bmatrix}\!,
\begin{bmatrix}
     1 \\1 \\1 \\0 \\ 0
    \end{bmatrix}\!,
\begin{bmatrix}
     1 \\1 \\1 \\0 \\ 1
    \end{bmatrix}\!,
\begin{bmatrix}
     1 \\1 \\ 1\\ 1\\ 0
    \end{bmatrix}\!,
\begin{bmatrix}
     1 \\1 \\ 1\\1 \\ 1
    \end{bmatrix}
\right\}\!.
\end{equation*}
If Assumption~\ref{ass:policies} holds, then, as noted in Section~\ref{sec:transmission-effects}, the link transmission
success processes $\gamma_m$ are conditionally independent. Thus, the conditional
      probabilities of $C$ being full rank can be obtained from $\Jset$ as follows:
      \begin{equation*}
        \begin{split}
          &\Prob\{r= 1 \,|\, \Xi=j\}
          =(1-{\phi}_{ 1|j}) {\phi}_{ 2|j} (1-{\phi}_{ 3|j}){\phi}_{ 4|j} {\phi}_{ 5|j}\\
          &+ (1-{\phi}_{ 1|j}) {\phi}_{ 2|j} {\phi}_{ 3|j} {\phi}_{ 4|j} {\phi}_{ 5|j}
          + {\phi}_{ 1|j} {\phi}_{ 2|j} (1-{\phi}_{ 3|j})  (1-{\phi}_{ 4|j}){\phi}_{ 5|j}\\
          &+{\phi}_{ 1|j} {\phi}_{ 2|j} (1-{\phi}_{ 3|j}){\phi}_{ 4|j}
          (1-{\phi}_{ 5|j})+{\phi}_{ 1|j} {\phi}_{ 2|j} (1-{\phi}_{ 3|j})
    {\phi}_{ 4|j} {\phi}_{ 5|j}\\
          &+{\phi}_{ 1|j} {\phi}_{ 2|j} {\phi}_{ 3|j}
    (1-{\phi}_{ 4|j}) (1-{\phi}_{ 5|j})+{\phi}_{ 1|j} {\phi}_{ 2|j} {\phi}_{ 3|j}
    (1-{\phi}_{ 4|j}) {\phi}_{ 5|j}\\
    &+{\phi}_{ 1|j} {\phi}_{ 2|j} {\phi}_{ 3|j}
    {\phi}_{ 4|j} (1-{\phi}_{ 5|j})+{\phi}_{ 1|j} {\phi}_{ 2|j} {\phi}_{ 3|j}
    {\phi}_{ 4|j} {\phi}_{ 5|j},
  \end{split}
\end{equation*}
for all $j\in\Bset$.\hfs
\end{ex}

\section{The One-sensor Case for LTI Systems}
  \label{sec:one-sensor-case}
Here, we focus on a particular instance of the sensor network model of interest,
namely, where there is only one sensor and one edge, and the system~(\ref{eq:1})
is LTI. In this case, it is easy to see that $C(k) =\gamma_{1}(k) C_1$ and the
estimator covariance matrix in~\eqref{eq:11} satisfies
\begin{equation}
\begin{split}
  \label{eq:9b}
   P(k+1&|k)  = A  P(k|k-1) A^T +Q -   K(k) C_1 P(k|k-1) A^T\\
   K(k)&=\gamma_{1}(k)A P(k|k-1) C_1^T\!\big(C_1 P(k|k-1)C_1^T +R \big)^{-1}.
 \end{split}
\end{equation}
Theorem~\ref{thm:multiple_sensors} can be directly applied to this setup, yielding the following result:

\begin{coro}
\label{thm:one-sensor-case}
  Consider the model introduced  in 
  Section~\ref{sec:transmission-effects} with $M=1$.  Suppose that
  Assumptions~\ref{ass:Markov} and~\ref{ass:policies} hold, and that $C_1$
  in~\eqref{eq:2a} is full   
  rank. If there exists $\rho \in [0,1)$ such that
  \begin{equation}
    \label{eq:2}
   \|A\|^2  \sum_{j\in\Bset}p_{ij}(1-\phi_{1|j}) \leq \rho,\quad \forall i\in\Bset,
  \end{equation}
then the Kalman filter is exponentially bounded.
\end{coro}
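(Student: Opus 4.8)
The plan is to derive Corollary~\ref{thm:one-sensor-case} as a direct specialization of Theorem~\ref{thm:multiple_sensors}, so the work amounts to checking that the hypotheses of the theorem are met whenever~\eqref{eq:2} holds. First I would observe that when $M=1$ the stochastic observation matrix is simply $C(k)=\gamma_1(k)C_1$, and since $C_1$ is assumed full (column) rank, $C(k)$ is full rank precisely when $\gamma_1(k)=1$. Hence the rank-indicator process defined in~\eqref{eq:12} satisfies $r(k)=\gamma_1(k)$, and in particular $\Prob\{r(k)=0\mid\Xi(k)=j\}=1-\phi_{1|j}$ by the definition~\eqref{eq:43} of $\phi_{1|j}$.

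Next I would plug this into the expression~\eqref{eq:18} for $\nu_i$. Because $\path(S_1)$ consists of the single edge $\Ecal_1$, equation~\eqref{eq:18} gives
\begin{equation*}
  \nu_i = \sum_{j\in\Bset} p_{ij}\,\Prob\{r(k)=0\mid\Xi(k)=j\}
        = \sum_{j\in\Bset} p_{ij}\,(1-\phi_{1|j}),\quad \forall i\in\Bset.
\end{equation*}
Since the system is LTI, $A(k)\equiv A$, so $\max_{k\in\Nset_0}\|A(k)\|^2=\|A\|^2$, and condition~\eqref{eq:18b} of Theorem~\ref{thm:multiple_sensors} reduces to
\begin{equation*}
  \max_{i\in\Bset} \nu_i\,\|A\|^2
  = \|A\|^2 \max_{i\in\Bset}\sum_{j\in\Bset} p_{ij}(1-\phi_{1|j}) \leq \rho,
\end{equation*}
which is exactly the hypothesis~\eqref{eq:2} (the maximum over $i$ being implicit in the ``$\forall i\in\Bset$'' quantifier). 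Assumptions~\ref{ass:Markov} and~\ref{ass:policies} are inherited directly, so all premises of Theorem~\ref{thm:multiple_sensors} are in force, and exponential boundedness of the Kalman filter follows.

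Honestly, there is no genuine obstacle here: the corollary is a routine corollary. The only points requiring a word of care are (i) justifying $r(k)=\gamma_1(k)$, which hinges on $C_1$ being full column rank so that left-multiplication by the scalar $\gamma_1(k)\in\{0,1\}$ preserves or destroys the rank exactly as $\gamma_1(k)$ is $1$ or $0$; and (ii) confirming that the conditional independence in time of $\gamma_1(k)$ given $\Xi(k)$ — guaranteed by Assumption~\ref{ass:policies} as discussed in Section~\ref{sec:transmission-effects} — is what lets the single-link specialization of~\eqref{eq:20} and~\eqref{eq:18} go through. Once these are noted, the proof is a two-line reduction to the already-established theorem.
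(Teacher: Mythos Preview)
Your proposal is correct and matches the paper's own argument exactly: the paper's proof is the single line ``Immediate from Theorem~\ref{thm:multiple_sensors}, since $r(k)=\gamma_1(k)$,'' and your write-up simply unpacks this reduction in detail. The additional justification you provide for $r(k)=\gamma_1(k)$ and the computation of $\nu_i$ via~\eqref{eq:18} is precisely what the one-line proof leaves implicit.
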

\begin{proof}
Immediate from Theorem~\ref{thm:multiple_sensors}, since $r(k)=\gamma_1(k)$.  
\end{proof}

\par Kalman filtering with a single sensor-link and  Markovian 
packet dropouts was investigated in
\cite{xiexie08,huadey07,youfu11}. Markovian dropouts correspond to the particular case of the setup
considered in Corollary~\ref{thm:one-sensor-case}, namely, where
$  \Bset=\{1,2\}$, $\phi_{1|1}=1$, and $\phi_{1|2}=0$, see
Fig.~\ref{fig:markov_tac12}.  Direct calculations give that
 with these
parameters the
sufficient condition for stochastic stability~(\ref{eq:2}) reduces
to
\begin{equation*}
  \|A\|^2 \max (p_{12},p_{22})\leq \rho<1,
\end{equation*}
thus, Corollary~\ref{thm:one-sensor-case} 
 becomes
 akin to Theorem 3  in 
\cite{xiexie08}. In a similar manner, it can be shown that the hidden Markov
model of\cite{fleran04} (which generalizes the Gilbert loss model\cite{gilber60}  and a
fixed-length burst loss description) can be recovered by setting
$\phi_{1|j}\in\{0,1\}$ for all $j\in \Bset$ in the hypotheses of
Corollary~\ref{thm:one-sensor-case}.  Vice-versa,  the model considered in
Corollary~\ref{thm:one-sensor-case}, 
can be stated in terms of the hidden Markov chain model
of\cite{fleran04} by considering the aggregated state process
$\{(\gamma_1,\Xi)\}_{\Nset_0}$. 


\par Our model 
can be further simplified by allowing for   only one network state,
i.e., by setting  $|\Bset|=1$ in~(\ref{eq:15}). In this case, we obtain
 a system with i.i.d.\
dropouts having transmission success probabilities
$\Prob\{\gamma_1(k)=1\}=\phi_{1|1}$, see~(\ref{eq:43}). Whilst this situation will not often be
encountered in practice, it certainly is of significant system-theoretic
importance, and has been extensively studied; see, e.g.,
\cite{schsin07,sinsch04,plabul09,censi11,rohmar10,epsshi08,karsin12,shieps10}.
With i.i.d.\ dropouts, the 
 condition~(\ref{eq:2}) becomes
$$\Prob\{\gamma_1(k)=0\}\|A\|^2\leq\rho<1,$$ thereby
 resembling various
conditions which have been reported in the literature; see
\cite{schsin07}.\footnote{The i.i.d.\ dropout case can also be regarded as a
  special instance of Corollary~\ref{thm:one-sensor-case}, where 
$  \Bset=\{1,2\}$, $\phi_{1|1}=1$, $\phi_{1|2}=0$, $p_{21}=p_{11}$, and
$p_{12}=p_{22}$ is the dropout probability.}

\section{Network States with Arbitrary Holding Times}
\label{sec:alternative-model}
In Sections~\ref{sec:stability-analysis} and~\ref{sec:one-sensor-case}  we
assumed that the network state may change at every 
instant $k\in\Nset_0$, see Assumption~\ref{ass:Markov}. This model serves to
describe situations where the radio environment changes relatively fast. 
%
We will next present a more general sensor network connectivity model. It
allows one to impose minimum holding times on the network states and is thereby
especially tailored for situations where the radio environment changes slowly.

\begin{figure}[t]
  \centering
\input{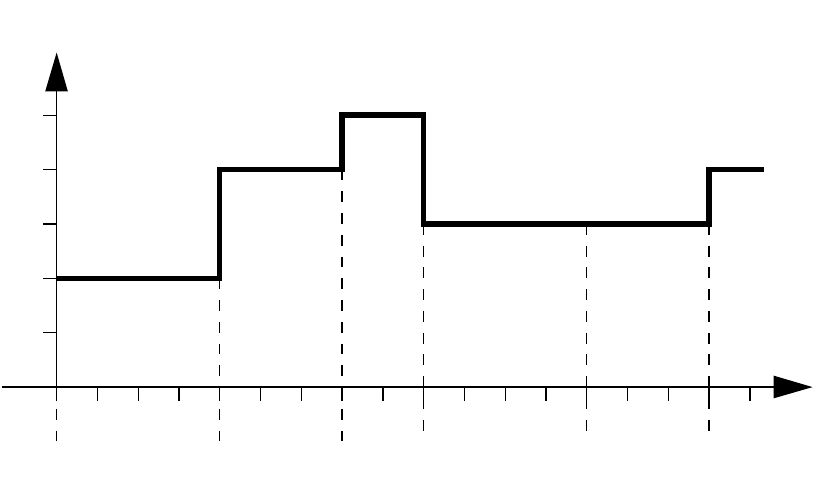_t}
  \caption{Semi-Markov network state model as per
    Assumption~\ref{ass:dwell}.} 
  \label{fig:semi_model}
\end{figure}

\subsection{Semi-Markov Model}
\label{sec:semi-markov-model}
We will allow the times
between switches to follow an arbitrary probability
distribution. At the 
instants of transitions,  henceforth denoted by the
ordered set  
\begin{equation}
  \label{eq:8}
  \Kset \eq \{k_\ell\}_{\ell \in \Nset_0}\subseteq \Nset_0, \quad k_0=0,
\end{equation}
the process $\{\Xi\}_{\Kset_0}$ behaves like a Markov chain.  Thus, our model fits into the
   semi-Markovian framework considered, e.g.,
   in\cite{cinlar69,howard71,kleinr76,barlim08,censi11}. To 
   be more
  precise, we introduce the following assumption:
\begin{ass}
\label{ass:dwell}
The \emph{holding  times},
\begin{equation}
\label{eq:6}
\Delta_\ell\eq k_{\ell+1}-k_\ell\in\Nset, \quad \ell \in
\Nset_0,
\end{equation}
and the following transition $\Xi(k_{\ell+1})$ are conditionally
independent given (and depend only on)  the current state, $\Xi(k_{\ell})$, 
i.e., we have
%
\begin{equation}
\label{eq:19}
\begin{split}
  \Prob\{\Xi&(k_{\ell+1})=j,\Delta_\ell=\delta
  \,|\,\Xi(k_0),\Xi(k_1)\dots,\Xi(k_{\ell-1}),\\ 
  &\Xi(k_\ell)=i, k_0,\dots,k_\ell
  \}
  =q_{ij} \psi_{i}(\delta),
\end{split}
  \end{equation}
for all $i,j\in \Bset$, $\delta\in\Nset$, where
\begin{equation}
\label{eq:25}
  \begin{split}
    \psi_{i}(\delta)&\eq \Prob\{\Delta_\ell = \delta \,|\,\Xi(k_\ell)=i \}\\
    q_{ij}&\eq\Prob\{\Xi(k_{\ell+1})=j\,|\,\Xi(k_{\ell})=i\},
  \end{split}
\end{equation}
are the conditional distribution of  the holding times and the transition
probabilities of the embedded Markov chain $\{\Xi(k_{\ell})\}_{\ell \in
  \Nset_0}$, respectively.\hfs
\end{ass}

Note that if in~\eqref{eq:6} we have $\Delta_\ell \geq 2$, for some $\ell\in
\Nset_0$, then
\begin{equation}
  \label{eq:16}
  \Xi(k_\ell)=\Xi(k_\ell+1) = \dots =
\Xi(k_\ell+\Delta_\ell-1).
\end{equation}
Thus, the \emph{renewal process}
$\{(\Xi(k_\ell),k_\ell)\}_{\ell\in\Nset_0}$   describes the network
  state trajectory \emph{at all times} $k\in\Nset_0$.
It is worth emphasizing that, unless $q_{ii}=0$ for all $i\in\Bset$, \eqref{eq:19} allows for \emph{virtual
transitions}, i.e., where $\Xi(k_{\ell+1})=\Xi(k_\ell)$, see
Figs.~\ref{fig:semi_model} and~\ref{fig:dwell_model2}. The above transition
model generalizes the Markov model in Assumption~\ref{ass:Markov}, see also Fig.~\ref{fig:markov_tac12}, by allowing
one to assign holding time distributions. \emph{Inter-alia}, this serves to
capture 
situations where the environment changes slowly, in relation to the
sampling frequency of the system~\eqref{eq:1}, see~\eqref{eq:16}.

\begin{figure}[t]
  \centering
\input{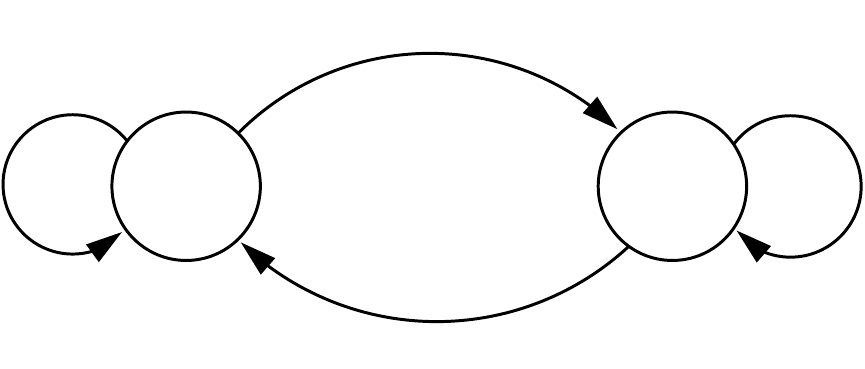_t}
  \caption{Semi-Markov sensor network connectivity model as per
    Assumption~\ref{ass:dwell}, where $\Bset = \{1,2\}$.} 
  \label{fig:dwell_model2}
\end{figure}

\begin{ex}
  Consider the scheme in Fig.~\ref{fig:truck_tac12} and assume that the
  position $\Xi(k)=1$ is the robot ``home'' position (e.g., where its batteries
  are charged). The robot trajectory could, for example, be described by the model in
  Assumption~\ref{ass:dwell} with transition probabilities
  \begin{alignat*}{4}
    q_{11}&=0.8,&\quad q_{12}&=0.1,&\quad q_{13}&=0,&\quad q_{14}&=0.1,\\
    q_{21}&=0.5,&\quad q_{22}&=0.3,&\quad q_{23}&=0.1,&\quad q_{24}&=0.1,\\
    q_{31}&=0,&\quad q_{32}&=0.2,&\quad q_{33}&=0.5,&\quad q_{34}&=0.3,\\
    q_{41}&=0.6,&\quad q_{42}&=0.2,&\quad q_{43}&=0.1,&\quad q_{44}&=0.1, 
  \end{alignat*} 
and holding time distributions
\begin{equation*}
  \begin{split}
    \psi_1(\delta) &=
    \begin{cases}
       1&\text{if $\delta =30$,}\\
       0&\text{if $\delta \not =30$.}
    \end{cases}\\
     \psi_2(\delta) &= \psi_3(\delta)= 1/8,\quad\text{if $\delta \in\{1,2,\dots,8\}$},\\ 
    \psi_4(\delta) &= 1/6,\quad\text{if $\delta \in\{5,6,\dots,10\}$}.
  \end{split}
\end{equation*}
In the following, we will study stochastic stability of the Kalman filter~(\ref{eq:11}) for 
 sensor network setups  of this type. \hfs
\end{ex}

\subsection{Stability Analysis}
\label{sec:stability-analysis-1}
To characterize the system behaviour with the semi-Markov model of
Assumption~\ref{ass:dwell}, it is convenient to  introduce the \emph{transition matrix}
$\Phi(\ell,k)$, see\cite{andmoo79}, via
\begin{equation*}
  \begin{split}
    \Phi(\ell,k)&=A(\ell-1)A(\ell-2) \dots A(k),\quad \ell>k\\
    \Phi(\ell,\ell)&=I_n,
  \end{split}
\end{equation*}
so that
\begin{equation*}
  x(k+i)=\Phi(k+i,k)x(k)+\sum_{\ell=0}^{i-1}\Phi(k+i,k+\ell+1)w(k+\ell),
\end{equation*}
expression which follows directly from~(\ref{eq:1}). We also denote the 
observability matrix of order $t\in\Nset_0$ with initial step $k$, via
$\mathcal{O}(k,k)=C(k)$, and
\begin{equation*}
  \mathcal{O}(k+t,k) =
  \begin{bmatrix}
    C(k)\Phi(k,k)\\
    C(k+1)\Phi(k+1,k)\\
    \vdots\\
    C(k+t)\Phi(k+t,k)
  \end{bmatrix},\quad t \in \Nset.
\end{equation*}
 Our analysis  makes use of the  process
 $\{\varrho(k_\ell)\}_{k_\ell\in\Kset}$, where  
\begin{equation}
  \label{eq:12b}
  \varrho(k_\ell) \eq
\begin{cases}
  1&\text{if $\mathcal{O}(k_\ell+\Delta_\ell-1,k_\ell)$ is full rank,}\\
  0& \text{otherwise.}
  \end{cases}
\end{equation} 
Clearly, $\varrho(k_\ell)$ is a function of the individual link success outcomes
$\{\gamma_m(k)\}_{k_\ell}^{k_{\ell+1}-1}$, $m\in\{1,\dots,M\}$ and of
$\{A(k)\}_{k_\ell}^{k_{\ell+1}-2}$. Akin to what was done in
Section~\ref{sec:stability-analysis}, see~(\ref{eq:50}), we define
\begin{equation}
  \label{eq:33}
    \begin{split}
      \mu_i&(k_\ell,\delta)\eq \Prob\{\varrho(k_\ell) =0\,|\,\Xi(k_\ell-1)=i,
      \Delta_\ell=\delta\} \\
      &=\sum_{j\in\Bset}\Prob\{\varrho(k_\ell) =0\,|\,\Xi(k_\ell-1)=i,
      \Delta_\ell=\delta,\Xi(k_\ell)=j\}\\
      &\qquad \times\Prob\{\Xi(k_\ell)=j\,|\,\Xi(k_\ell-1)=i,\Delta_\ell=\delta\}\\
      &= \sum_{j\in\Bset}q_{ij} \Prob\{\varrho(k_\ell)
      =0\,|\,\Xi(k_\ell)=j, \Delta_\ell=\delta\}, \quad i\in \Bset,
    \end{split}
   \end{equation}
which denotes the  probability of  $\mathcal{O}(k_\ell+\delta-1,k_\ell)$ not being
full rank, conditioned on $\Xi(k_\ell-1)=i$. The following result establishes
sufficient conditions for exponential boundedness of the Kalman
filter~(\ref{eq:11}) for cases where 
$\{\Delta_\ell\}_{\ell\in\Nset_0}$ has bounded support.

\begin{thm}
\label{thm:semimarkov}
  Suppose that Assumptions~\ref{ass:policies} and~\ref{ass:dwell} hold and that
there exists a finite value
$\sigma$ such that
$\Delta_\ell\leq \sigma$, for all $\ell \in\Nset_0$.\footnote{Since our model
  allows for virtual transitions, we do not impose that the network state
  changes, at most, every $\sigma$ instants.} If
 there exists $\rho \in [0,1)$ such that
  \begin{equation}
    \label{eq:18c}
   \max_{(i,k_\ell)\in\Bset\times \Kset}
\sum_{\delta=1}^\sigma \mu_i(k_\ell,\delta) \sum_{j\in\Bset} \psi_{j}(\delta)q_{ij}
     \|\Phi(k_\ell+\delta,k_\ell)\|^2 \leq \rho^\sigma,
  \end{equation}
then the Kalman filter in~(\ref{eq:11}) is exponentially bounded.
\end{thm}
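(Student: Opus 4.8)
The plan is to reduce Theorem~\ref{thm:semimarkov} to an application of the argument behind Theorem~\ref{thm:multiple_sensors} by working on the subsampled time scale $\Kset$ and treating the stretch from $k_\ell$ to $k_{\ell+1}$ as a single ``super-step'' of an induced process. First I would establish, in the spirit of \cite{tarras76} and the standard deterministic Kalman filter bounds in \cite{andmoo79}, a pathwise inequality of the form
\begin{equation*}
  \tr P(k_{\ell+1}|k_{\ell+1}-1) \leq \varrho(k_\ell)\, c_1 + \big(1-\varrho(k_\ell)\big)\,\|\Phi(k_{\ell+1},k_\ell)\|^2 \,\tr P(k_\ell|k_\ell-1) + c_2,
\end{equation*}
where $c_1,c_2$ are finite constants depending only on the (bounded) sequences $\{A\}_{\Nset_0}$, $\{Q\}_{\Nset_0}$, $\{R_m\}_{\Nset_0}$ and on $\sigma$. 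The idea is that when $\mathcal{O}(k_\ell+\Delta_\ell-1,k_\ell)$ is full rank (i.e.\ $\varrho(k_\ell)=1$) the filter is uniformly observable over the interval and $\tr P$ at the end of the interval is bounded by a constant irrespective of its value at the start; when $\varrho(k_\ell)=0$ one simply propagates the error covariance forward at most $\sigma$ open-loop steps, picking up the factor $\|\Phi(k_{\ell+1},k_\ell)\|^2$ (using $\|\Phi\|$ bounded since $\Delta_\ell\leq\sigma$) plus a bounded additive noise contribution.

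Next I would take conditional expectations and exploit the conditional-independence structure of Assumption~\ref{ass:dwell}. Conditioning on $\Xi(k_\ell-1)=i$ (which is $\mathcal{F}_{k_\ell-1}$-measurable) and then on the realized holding time $\Delta_\ell=\delta$ and next state $\Xi(k_\ell)=j$, the packet-drop outcomes over $[k_\ell,k_{\ell+1}-1]$ — and hence $\varrho(k_\ell)$ — are conditionally independent of $P(k_\ell|k_\ell-1)$, by Assumption~\ref{ass:policies} and the network model of Section~\ref{sec:transmission-effects}. So
\begin{equation*}
  \E\big\{\big(1-\varrho(k_\ell)\big)\|\Phi(k_{\ell+1},k_\ell)\|^2 \,\tr P(k_\ell|k_\ell-1)\,\big|\,\Xi(k_\ell-1)=i\big\} \leq \Big(\sum_{\delta=1}^\sigma \mu_i(k_\ell,\delta)\sum_{j\in\Bset}\psi_j(\delta)q_{ij}\|\Phi(k_\ell+\delta,k_\ell)\|^2\Big)\,\E\{\tr P(k_\ell|k_\ell-1)\},
\end{equation*}
using the definitions~\eqref{eq:33} and~\eqref{eq:19}--\eqref{eq:25}. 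By~\eqref{eq:18c} the bracketed quantity is at most $\rho^\sigma<1$, uniformly in $i$ and $k_\ell$. Taking a further (unconditional) expectation and writing $\pi_\ell \eq \E\{\tr P(k_\ell|k_\ell-1)\}$ yields a scalar linear recursion $\pi_{\ell+1}\leq \rho^\sigma \pi_\ell + c$ for a finite constant $c$, whence $\pi_\ell \leq (\rho^\sigma)^\ell \pi_0 + c/(1-\rho^\sigma)$ is bounded, indeed geometrically contracting toward a fixed bound on the subsampled scale.

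Finally I would transfer the bound from the switching instants $\Kset$ back to all $k\in\Nset_0$: for any $k$ with $k_\ell\le k<k_{\ell+1}$, one has $\tr P(k|k-1)\leq \|\Phi(k,k_\ell)\|^2\,\tr P(k_\ell|k_\ell-1) + c_3$ with $\|\Phi(k,k_\ell)\|^2$ uniformly bounded (again because $k-k_\ell<\sigma$) and $c_3$ finite, so $\E\{\tr P(k|k-1)\}$ inherits the uniform bound, and combining with the geometric decay on $\Kset$ and the fact that $k_\ell\le k\le \sigma\ell$ produces an estimate of the form $\E\{\tr P(k|k-1)\}\leq \alpha\rho^k+\beta$, matching Definition~\ref{def:expbound}. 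The main obstacle I anticipate is the first step: carefully justifying the pathwise ``reset'' inequality when $\varrho(k_\ell)=1$, i.e.\ showing that full column rank of the \emph{block} observability matrix $\mathcal{O}(k_\ell+\Delta_\ell-1,k_\ell)$ over an interval of length at most $\sigma$, together with the uniform bounds on $A,Q,R$, gives a constant upper bound on $\tr P$ at the end of the interval that does not depend on $P(k_\ell|k_\ell-1)$ — this requires a quantitative observability-Gramian lower bound and some care since $C(k)$ is random but its rank event is exactly what $\varrho$ records; the rest is bookkeeping with the semi-Markov conditioning and the geometric-series estimate.
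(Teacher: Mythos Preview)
Your proposal is correct and follows essentially the same route as the paper's proof in Appendix~B: subsample at the renewal times $k_\ell\in\Kset$, split on the event $\varrho(k_\ell)\in\{0,1\}$ (reset via a suboptimal least-squares predictor when the block observability matrix has full column rank; open-loop propagation bound otherwise), derive a one-step contraction $\E\{V_{\ell+1}\mid \cdot\}\le W+\rho^\sigma V_\ell$, iterate to get exponential boundedness on $\Kset$, and then fill in between transition times using $\Delta_\ell\le\sigma$. The only organisational difference is that the paper first proves the composite process $Z(k_\ell)=(P(k_\ell|k_\ell-1),\Xi(k_\ell-1))$ is Markov on $\Kset$ (Lemma~\ref{lem:markov_semi}) and then invokes \cite[Prop.~3.2]{meyn89}, whereas you work directly with unconditional expectations; both lead to the same scalar recursion.

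One small correction: in your displayed inequality the right-hand side should carry the \emph{conditional} expectation $\E\{\tr P(k_\ell|k_\ell-1)\mid \Xi(k_\ell-1)=i\}$, not the unconditional one; the inequality as written is not true pointwise in $i$. This does not affect the argument, since after bounding the bracketed factor uniformly by $\rho^\sigma$ and then taking the further unconditional expectation you recover exactly $\pi_{\ell+1}\le\rho^\sigma\pi_\ell+c$. Your closing remark about needing a quantitative lower bound on the observability Gramian when $\varrho(k_\ell)=1$ is well taken; the paper handles this only implicitly by appealing to boundedness of $\{A\},\{Q\},\{R\}$ and the fact that there are finitely many dropout patterns over a window of length at most $\sigma$.
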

\begin{proof}
  See Appendix~\ref{sec:proof-theor-refthm:s}.
\end{proof}
To elucidate the condition~(\ref{eq:18c}), it is convenient to recall the
definitions~(\ref{eq:25}) and~(\ref{eq:33}) and note that for a
given holding time $\Delta_\ell=\delta\in\{1,2,\dots,\sigma\}$, we have
\begin{equation*}
  \begin{split}
    \mu_i(k_\ell,\delta) &\sum_{j\in\Bset}\psi_{j}(\delta)q_{ij}\\
     &=   \Prob\{\varrho(k_\ell)
     =0\,|\,\Xi(k_\ell-1)=i, \Delta_\ell=\delta\}\\ 
     &\quad\times\sum_{j\in\Bset} \Prob\{\Delta_\ell = \delta \,|\,\Xi(k_\ell)=j \}\\ 
     &\quad\times \Prob\{\Xi(k_{\ell})=j\,|\,\Xi(k_{\ell}-1)=i\} \\
    &= \Prob\{\varrho(k_\ell) =0\,|\,\Xi(k_\ell-1)=i,
      \Delta_\ell=\delta\}\\ 
     &\quad\times \Prob\{\Delta_\ell = \delta \,|\,\Xi(k_\ell-1)=i \} . 
  \end{split}
\end{equation*}
Thus, the left-hand-side of~(\ref{eq:18c}) extends the left-hand-side
of~(\ref{eq:18b}) for use in the semi-Markov model by
averaging non-full-rank observation outcomes over finite horizons.    Here it is worth noting
 that the Markovian network model  of
Assumption~\ref{ass:Markov} corresponds to the special instance of the
model in Assumption~\ref{ass:dwell}, wherein $p_{ij}=q_{ij}$ for all $i,j\in\Bset$, and
$\Delta_\ell=1$ for all 
$\ell \in \Nset_0$, so that $\psi_{j}(1)=1$ for all $j\in\Bset$,  $\sigma =1$
and $\Kset=\Nset_0$. In addition, we have 
$\Phi(k+1,k)=A(k)$,
\begin{equation*}
  \begin{split}
    \mu_i(k,1)&=\sum_{j\in\Bset}p_{ij} \Prob\{\varrho(k) =0\,|\,\Xi(k)=j,
    \Delta_\ell=1\}\\
    &=\sum_{j\in\Bset}p_{ij} \Prob\{r(k) =0\,|\,\Xi(k)=j\}=\nu_i,
  \end{split}
\end{equation*}
and $\sum_{j\in\Bset} q_{ij}=1$ for all $i\in \Bset$. Consequently, it is easy
to see that, in 
this case, the
condition~\eqref{eq:18c}  condenses into~\eqref{eq:18b} and we recover the
result established in Theorem~\ref{thm:multiple_sensors}. 
\begin{rem}
 In contrast to Theorem~\ref{thm:multiple_sensors},  Theorem~\ref{thm:semimarkov} 
does not require  that the matrix $C(k)$ be full rank with non-zero probability in order to establish exponential
 boundedness of the Kalman filter. \emph{Inter-alia}, Theorem~\ref{thm:semimarkov} requires
that when no dropouts occur the system (\ref{eq:1})--(\ref{eq:2a}) be observable over horizons of length $\sigma$.  \hfs 
\end{rem}


\subsection{Example}
\label{sec:example}
To illustrate the use of Theorem~\ref{thm:semimarkov}, we examine a simple LTI plant model with  $M=1$
sensor and where 
\begin{equation}
  \label{eq:23}
  A=
  \begin{bmatrix}
    1.25 & 0\\ 1 &1.1
  \end{bmatrix},
\quad C_1=
\begin{bmatrix}
  1&1
\end{bmatrix},
\end{equation}
taken from
\cite{sinsch04}. The connectivity of $S_1$ to the gateway is described by two possible
configurations, $\Xi(k)\in\Bset=\{1,2\}$, which obey Assumption~\ref{ass:dwell},
with  
\begin{equation*}
  \label{eq:36}
  \begin{split}
    \psi_1(\delta)&= 1/5, \quad
    \text{if $\delta \in\{1,2,\dots,5\}$},\\ 
  \psi_2(\delta)&=  1/7, \quad
  \text{if $\delta \in\{1,2,\dots,7\}$},
\end{split}
\end{equation*}
see Fig~\ref{fig:dwell_model2}. Power and bit-rate control laws are of the form~\eqref{eq:41}.
\par For one-sensor LTI systems, we have $C(k)=\gamma_1(k)C_1$, and
$\Phi(k+t,k)=A^t$ (which is obtained by setting  $A(k)=A$),
for all 
$k,t\in\Nset_0$, thus,
\begin{equation*}
  \label{eq:47}
  \mathcal{O}(k_\ell+\delta-1,k_\ell)=
  \begin{bmatrix}
    \gamma_1(k_\ell)C_1\\
    \gamma_1(k_\ell+1)C_1A\\
    \vdots\\
    \gamma_1(k_\ell+\delta-1)C_1A^{\delta-1}
  \end{bmatrix}.
\end{equation*}
It is easy to verify that for the system matrices given  in~\eqref{eq:23}, the matrix
\begin{equation*}
  \begin{bmatrix}
    C_1\\C_1A^r
  \end{bmatrix}
\end{equation*}
is invertible, for all $r\in\{1,\dots,6\}$. Therefore,
$\varrho(k_\ell)=0$, if and only if
\begin{equation*}
  \sum_{t=0}^{\Delta_\ell-1} \gamma_1(k_\ell+t)\leq 1.
\end{equation*}
and the conditional probabilities in~(\ref{eq:33}) become 
\begin{equation*}
  \begin{split}
    \mu_i&(k_\ell,\delta)\\
    &= \sum_{j=1}^2 q_{ij}  \Prob\Bigg\{
    \sum_{t=0}^{\Delta_\ell-1}
    \gamma_1(k_\ell+t)=0\,\bigg|\,\Xi(k_\ell)=j,\Delta_\ell=\delta\Bigg\}\\
    &\, +\sum_{j=1}^2 q_{ij} \Prob\Bigg\{\sum_{t=0}^{\Delta_\ell-1}
    \gamma_1(k_\ell+t)=1\,\bigg|\,\Xi(k_\ell)=j,\Delta_\ell=\delta\Bigg\}.
  \end{split}
\end{equation*}
Thus, for $\delta =1$ we obtain
\begin{equation*}
  \begin{split}
    \mu_i(k_\ell,1)&= \sum_{j=1}^2 q_{ij} \Prob\big\{
    \gamma_1(k_\ell)\in\{0,1\}\,|\,\Xi(k_\ell)=j,\Delta_\ell=1\big\}\\
    &=\sum_{j=1}^2
    q_{ij}=1,
  \end{split}
\end{equation*}
whereas, for $\delta \geq 2$,
\begin{equation}
\label{eq:24}
    \mu_i(k_\ell,\delta)= \sum_{j=1}^2
    q_{ij}\big((1-\phi_{1|j})^\delta+\delta(1-\phi_{1|j})^{\delta-1}\phi_{1|j}\big),
\end{equation}
see~(\ref{eq:43}). Given the above, Theorem~\ref{thm:semimarkov} establishes
that the Kalman filter~(\ref{eq:11}) is exponentially bounded if there exists
$\rho\in[0,1)$, such that
\begin{equation*}
  \begin{split}
    &\max_{i\in\{1,2\}} \sum_{\delta=1}^7\|A^\delta\|^2 \mu_i(k_\ell,\delta)
\Big(\psi_{1}(\delta)q_{i1}+\psi_{2}(\delta)q_{i2} \Big) \\
&=\max_{i\in\{1,2\}} \bigg(\frac{ q_{i1}}{5}+\frac{q_{i2}}{7} \bigg) \bigg( 
\|A\|^2\\
&\quad+
\sum_{\delta=2}^5\|A^\delta\|^2 \mu_i(k_\ell,\delta) \bigg)
 +\frac{q_{i2}}{7}\sum_{\delta=6}^7\|A^\delta\|^2 \mu_i(k_\ell,\delta)\leq \rho^7,
  \end{split}
\end{equation*}
where $\mu_i(k_\ell,\delta)$ are given in~(\ref{eq:24}).

\section{Conclusions}
\label{sec:conclusions-1}
In this work we have studied  Kalman filtering 
for state estimation over a wireless sensor network.
Since the radio links
between the nodes are fading, even if alleviated by power and bit-rate control, packet drops
may occur. To model different radio connectivity configurations of the environment,
we have introduced  a network state
process. Through the use of stochastic
stability methods,  we have derived sufficient conditions for the Kalman filter
covariance matrix to be exponentially bounded  when the underlying
network state is described by a  (semi-)Markov chain. Under this assumption, channel gains will be correlated over space
and time,
which is a suitable model when considering shadow fading. 
 In  particular cases, the sufficient condition obtained reduce to  stability
results previously documented in the literature. 

\par Future
work includes complementing the sufficient conditions for exponential boundedness of the
estimator presented with necessary ones, and using the results for the design of power
and bit-rate control and re-routing strategies. Also of interest is extending the
fading network model
proposed to more general topologies  and study its use  for the
analysis and design of closed loop networked control 
system architectures.

\bibliography{/Users/daniel/Dropbox/dquevedo}

\begin{thebibliography}{10}
\providecommand{\url}[1]{#1}
\csname url@rmstyle\endcsname
\providecommand{\newblock}{\relax}
\providecommand{\bibinfo}[2]{#2}
\providecommand\BIBentrySTDinterwordspacing{\spaceskip=0pt\relax}
\providecommand\BIBentryALTinterwordstretchfactor{4}
\providecommand\BIBentryALTinterwordspacing{\spaceskip=\fontdimen2\font plus
\BIBentryALTinterwordstretchfactor\fontdimen3\font minus
  \fontdimen4\font\relax}
\providecommand\BIBforeignlanguage[2]{{%
\expandafter\ifx\csname l@#1\endcsname\relax
\typeout{** WARNING: IEEEtran.bst: No hyphenation pattern has been}%
\typeout{** loaded for the language `#1'. Using the pattern for}%
\typeout{** the default language instead.}%
\else
\language=\csname l@#1\endcsname
\fi
#2}}

\bibitem{queahl11b}
D.~E. Quevedo, A.~Ahl\'{e}n, and K.~H. Johansson, ``Stability of state
  estimation over sensor networks with {M}arkovian fading channels,'' in
  \emph{Proc.~{IFAC} World Congr.}, 2011.

\bibitem{chejoh11}
J.~Chen, K.~H. Johansson, S.~Olariu, I.~C. Paschalidis, and Stokmenovic,
  ``Guest editorial special issue on wireless sensor and actuator networks,''
  \emph{{IEEE} Trans. Automat. Contr.}, vol.~56, no.~11, pp. 2244--2246, Oct.
  2011.

\bibitem{aluinn11}
R.~Alur, A.~D'Innocenzo, K.~H. Johansson, G.~J. Pappas, and G.~Weiss,
  ``Compositional modeling and analysis of multi-hop control networks,''
  \emph{{IEEE} Trans. Automat. Contr.}, vol.~56, no.~10, pp. 2345--2357, Oct.
  2011.

\bibitem{bjonet11}
M.~Bj\"{o}rkbom, S.~Nethi, L.~M. Eriksson, and R.~J\"{a}ntti, ``Wireless
  control system design and co-simulation,'' \emph{Contr. Eng. Pract.},
  vol.~19, no.~9, pp. 1075--1086, Sept. 2011.

\bibitem{ilymah04}
M.~Ilyas, I.~Mahgoub, and L.~Kelly, \emph{Handbook of Sensor Networks: Compact
  Wireless and Wired Sensing Systems}.\hskip 1em plus 0.5em minus 0.4em\relax
  Boca Raton, FL, USA: {CRC}-Press, Inc, 2004.

\bibitem{shezha07}
X.~Shen, Q.~Zhang, and R.~{Caiming Qiu}, ``Wireless sensor networking [guest
  ed.],'' \emph{{IEEE} Wireless Commun.}, vol.~14, no.~6, pp. 4--5, Dec. 2007.

\bibitem{goldsm05}
A.~Goldsmith, \emph{Wireless Communications}.\hskip 1em plus 0.5em minus
  0.4em\relax Cambridge University Press, 2005.

\bibitem{proaki95}
J.~G. Proakis, \emph{Digital Communications}, 3rd~ed.\hskip 1em plus 0.5em
  minus 0.4em\relax New York, N.Y.: McGraw-Hill, 1995.

\bibitem{caitar99}
G.~Caire, G.~Taricco, and E.~Biglieri, ``Optimum power control over fading
  channels,'' \emph{{IEEE} Trans. Inform. Theory}, vol.~45, no.~5, pp.
  1468--1489, July 1999.

\bibitem{bergal02}
R.~A. Berry and R.~G. Gallager, ``Communication over fading channels with delay
  constraints,'' \emph{{IEEE} Trans. Inform. Theory}, vol.~48, no.~5, pp.
  1135--1149, May 2002.

\bibitem{queahl10}
D.~E. Quevedo, A.~Ahl\'{e}n, and J.~{\O stergaard}, ``Energy efficient state
  estimation with wireless sensors through the use of predictive power control
  and coding,'' \emph{{IEEE} Trans. Signal Processing}, vol.~58, no.~9, pp.
  4811--4823, Sept. 2010.

\bibitem{queahl12}
D.~E. Quevedo, A.~Ahl\'{e}n, A.~S. Leong, and S.~Dey, ``On {K}alman filtering
  over fading wireless channels with controlled transmission powers,''
  \emph{Automatica}, vol.~48, no.~7, pp. 1306--1316, July 2012.

\bibitem{shicap10}
L.~Shi, A.~Capponi, K.~H. Johansson, and R.~M. Murray, ``Resource optimization
  in a wireless sensor network with guaranteed estimator performance,''
  \emph{{IET} Control Theory Appl.}, vol.~4, no.~5, pp. 710--723, 2010.

\bibitem{shi09b}
L.~Shi, ``Kalman filtering over graphs: Theory and applications,'' \emph{{IEEE}
  Trans. Automat. Contr.}, vol.~54, no.~9, pp. 2230--2234, Sept. 2009.

\bibitem{gupdan09}
V.~Gupta, A.~F. Dana, J.~P. Hespanha, R.~M. Murray, and B.~Hassibi, ``Data
  transmission over networks for estimation and control,'' \emph{{IEEE} Trans.
  Automat. Contr.}, vol.~54, no.~8, pp. 1807--1819, Aug. 2009.

\bibitem{chisch11}
A.~Chiuso and L.~Schenato, ``Information fusion strategies and performance
  bounds in packet-drop networks,'' \emph{Automatica}, vol.~47, pp. 1304--1316,
  July 2011.

\bibitem{mogar11}
Y.~Mo, E.~Garone, A.~Casavola, and B.~Sinopoli, ``Stochastic sensor scheduling
  for energy constrained estimation in multi-hop wireless sensor networks,''
  \emph{{IEEE} Trans. Automat. Contr.}, vol.~56, no.~10, pp. 2489--2495, Oct.
  2011.

\bibitem{sinsch04}
B.~Sinopoli, L.~Schenato, M.~Franceschetti, K.~Poolla, M.~I. Jordan, and S.~S.
  Sastry, ``Kalman filtering with intermittent observations,'' \emph{{IEEE}
  Trans. Automat. Contr.}, vol.~49, no.~9, pp. 1453--1464, Sept. 2004.

\bibitem{plabul09}
K.~Plarre and F.~Bullo, ``On {K}alman filtering for detectable systems with
  intermittent observations,'' \emph{{IEEE} Trans. Automat. Contr.}, vol.~54,
  no.~2, pp. 386--390, Feb. 2009.

\bibitem{karsin12}
S.~Kar, B.~Sinopoli, and J.~M.~F. Moura, ``Kalman filtering with intermittent
  observations: Weak convergence to a stationary distribution,'' \emph{{IEEE}
  Trans. Automat. Contr.}, vol.~57, no.~2, pp. 405--420, Feb. 2012.

\bibitem{liugol04}
X.~Liu and A.~Goldsmith, ``Kalman filtering with partial observation losses,''
  in \emph{Proc.~IEEE Conf.~Decis.~Contr.}, Paradise Island, Bahamas, 2004, pp.
  4180--4186.

\bibitem{censi11}
A.~Censi, ``Kalman filtering with intermittent observations: Convergence for
  semi-{M}arkov chains and an intrinsic performance measure,'' \emph{{IEEE}
  Trans. Automat. Contr.}, vol.~56, no.~2, pp. 376--381, Feb. 2011.

\bibitem{huadey07}
M.~Huang and S.~Dey, ``Stability of {K}alman filtering with {M}arkovian packet
  losses,'' \emph{Automatica}, vol.~43, no.~4, pp. 598--607, Apr. 2007.

\bibitem{rohmar10}
E.~Rohr, D.~Marelli, and M.~Fu, ``Statistical properties of the error
  covariance in a {K}alman filter with random measurement losses,'' in
  \emph{Proc.~IEEE Conf.~Decis.~Contr.}, Atlanta, GA USA, 2010.

\bibitem{epsshi08}
M.~Epstein, L.~Shi, A.~Tiwari, and R.~M. Murray, ``Probabilistic performance of
  state estimation across a lossy network,'' \emph{Automatica}, vol.~44,
  no.~12, pp. 3046--3053, Dec. 2008.

\bibitem{shieps10}
L.~Shi, M.~Epstein, and R.~M. Murray, ``{K}alman filtering over a
  packet-dropping network: A probabilistic perspective,'' \emph{{IEEE} Trans.
  Automat. Contr.}, vol.~55, no.~3, pp. 594--604, March 2010.

\bibitem{jingup06}
Z.~Jin, V.~Gupta, and R.~Murray, ``State estimation over packet dropping
  networks using multiple description coding,'' \emph{Automatica}, vol.~42,
  no.~9, pp. 1441--1452, Sept. 2006.

\bibitem{xiexie08}
L.~Xie and L.~Xie, ``Stability of a random {R}iccati equation with {M}arkovian
  binary switching,'' \emph{{IEEE} Trans. Automat. Contr.}, vol.~53, no.~7, pp.
  1759--1764, August 2008.

\bibitem{youfu11}
K.~You, M.~Fu, and L.~Xie, ``Mean square stability for {K}alman filtering with
  {M}arkovian packet losses,'' \emph{Automatica}, vol.~47, no.~12, pp.
  2647--2657, Dec. 2011.

\bibitem{stenfl96}
{\"{O}}.~Stenflo, ``Iterated function systems controlled by a semi-{M}arkov
  chain,'' \emph{Theory Stoch.\ Process.}, vol.~2, no.~18, pp. 305--313, 1996.

\bibitem{bouger93}
P.~Bougerol, ``Kalman filtering with random coefficients and contractions,''
  \emph{SIAM Journal on Control and Optimization}, vol.~31, no.~4, pp.
  942--959, July 1993.

\bibitem{gilber60}
E.~N. Gilbert, ``Capacity of a burst-noise channel,'' \emph{The Bell Syst.
  Tech. J.}, vol.~39, pp. 1253--1265, Sept. 1960.

\bibitem{smisei03}
S.~C. Smith and P.~Seiler, ``Estimation with lossy measurements: Jump
  estimators for jump systems,'' \emph{{IEEE} Trans. Automat. Contr.}, vol.~48,
  no.~12, pp. 2163--2171, Dec. 2003.

\bibitem{queost11}
D.~E. Quevedo, J.~{\O stergaard}, and D.~Ne\v{s}i\'c, ``Packetized predictive
  control of stochastic systems over bit-rate limited channels with packet
  loss,'' \emph{{IEEE} Trans. Automat. Contr.}, vol.~56, no.~12, pp.
  2854--2868, Dec. 2011.

\bibitem{minfra09}
P.~Minero, M.~Franceschetti, S.~Dey, and G.~N. Nair, ``Data rate theorem for
  stabilization over time-varying feedback channels,'' \emph{{IEEE} Trans.
  Automat. Contr.}, vol.~54, no.~2, pp. 243--255, Feb. 2009.

\bibitem{park11}
P.~Park, ``Modeling, analysis, and design of wireless sensor network
  protocols,'' Ph.D. dissertation, KTH Electrical Engineering, Stockholm,
  Sweden, 2011.

\bibitem{ghamos11}
A.~Ghaffarkhah and Y.~Mostofi, ``Communication-aware motino planning in mobile
  networks,'' \emph{{IEEE} Trans. Automat. Contr.}, vol.~56, no.~10, pp.
  2478--2485, Oct. 2011.

\bibitem{gudmun91}
M.~Gudmundson, ``Correlation model for shadow fading in mobile radio systems,''
  \emph{Electron. Lett.}, vol.~27, no.~23, pp. 2145--2146, Nov. 1991.

\bibitem{agrpat09}
P.~Agrawal and N.~Patwari, ``Correlated link shadow fading in multi-hop
  wireless networks,'' \emph{{IEEE} Trans. Wireless Commun.}, vol.~8, no.~8,
  pp. 4024--4036, Aug. 2009.

\bibitem{bremau99}
P.~Br\'emaud, \emph{Markov Chains}.\hskip 1em plus 0.5em minus 0.4em\relax New
  York, N.Y.: Springer, 1999.

\bibitem{cinlar75}
E.~\c{C}inlar, \emph{Introduction to Stochastic Processes}.\hskip 1em plus
  0.5em minus 0.4em\relax Prentice Hall, Englewood Cliffs, NJ, 1975.

\bibitem{mosmur09}
Y.~Mostofi and R.~M. Murray, ``To drop or not to drop: {D}esign principles for
  {K}alman filtering over wireless fading channels,'' \emph{{IEEE} Trans.
  Automat. Contr.}, vol.~54, no.~2, pp. 376--381, Feb. 2009.

\bibitem{panver07}
N.~A. Pantazis and D.~D. Vergados, ``A survey on power control issues in
  wireless sensor networks,'' \emph{{IEEE} Commun. Surv. Tutorials}, vol.~9,
  no.~4, pp. 86--107, 2007.

\bibitem{ellio63}
E.~O. Elliot, ``Estimates of error rates for codes on burst-noise channels,''
  \emph{The Bell Syst. Tech. J.}, vol.~42, pp. 1977--1997, Sept. 1963.

\bibitem{fleran04}
A.~K. Fletcher, S.~Rangan, and V.~K. Goyal, ``Estimation from lossy sensor
  data: Jump linear modeling and {K}alman filtering,'' in \emph{Proc.
  {ACM/IEEE} Int. Conf. Information Processing in Sensor Networks}, 2004, pp.
  251--258.

\bibitem{rabine89}
L.~R. Rabiner, ``A tutorial on hidden {M}arkov models and selected applications
  in speech recognition,,'' \emph{Proc. {IEEE}}, vol.~77, no.~2, pp. 257--286,
  Feb. 1989.

\bibitem{barlim08}
V.~S. Barbu and N.~Limnios, \emph{Semi-{M}arkov Chains and Hidden Semi-Markov
  Models toward Applications}.\hskip 1em plus 0.5em minus 0.4em\relax Springer,
  2008.

\bibitem{andmoo79}
B.~D.~O. Anderson and J.~Moore, \emph{Optimal Filtering}.\hskip 1em plus 0.5em
  minus 0.4em\relax Englewood Cliffs, NJ: Prentice Hall, 1979.

\bibitem{schsin07}
L.~Schenato, B.~Sinopoli, M.~Franceschetti, K.~Poolla, and S.~S. Sastry,
  ``Foundations of control and estimation over lossy networks,'' \emph{Proc.
  {IEEE}}, vol.~95, no.~1, pp. 163--187, Jan. 2007.

\bibitem{tarras76}
T.-J. Tarn and Y.~Rasis, ``Observers for nonlinear stochastic systems,''
  \emph{{IEEE} Trans. Automat. Contr.}, vol.~21, no.~4, pp. 441--448, August
  1976.

\bibitem{quenes12a}
D.~E. Quevedo and D.~Ne\v{s}i\'c, ``Robust stability of packetized predictive
  control of nonlinear systems with disturbances and {M}arkovian packet
  dropouts,'' \emph{Automatica}, accepted for publication.

\bibitem{cinlar69}
E.~\c{C}inlar, ``Markov renewal theory,'' \emph{Adv. Applied Prob.}, vol.~1,
  no.~2, pp. 123--187, 1969.

\bibitem{howard71}
R.~A. Howard, \emph{Dynamic Probabilistic Systems, Vol. II: Semi-Markov and
  Decision Processes}.\hskip 1em plus 0.5em minus 0.4em\relax John Wiley \&
  Sons, 1971.

\bibitem{kleinr76}
L.~Kleinrock, \emph{Queueing Systems}.\hskip 1em plus 0.5em minus 0.4em\relax
  John Wiley \& Sons, 1976.

\bibitem{bernst09}
D.~S. Bernstein, \emph{Matrix Mathematics}, 2nd~ed.\hskip 1em plus 0.5em minus
  0.4em\relax Princeton, N.J.: Princeton University Press, 2009.

\bibitem{kushne71}
H.~Kushner, \emph{Introduction to Stochastic Control}.\hskip 1em plus 0.5em
  minus 0.4em\relax New York, N.Y.: Holt, Rinehart and Winston, Inc., 1971.

\bibitem{meyn89}
S.~P. Meyn, ``Ergodic theorems for discrete time stochastic systems using a
  stochastic {L}yapunov function,'' \emph{SIAM Journal on Control and
  Optimization}, vol.~27, no.~6, pp. 1409--1439, Nov. 1989.

\end{thebibliography}

\appendix
\subsection{Proof of Theorem \ref{thm:multiple_sensors}}
\label{sec:netw-with-mult-1}
We  first prepare two preliminary lemmas:
\begin{lem}
\label{lem:markov}
  Suppose that Assumptions~\ref{ass:Markov} and~\ref{ass:policies} hold. Then the composite process
  $\{Z\}_{{\Nset}_0}$ 
  defined via
  \begin{equation}
\label{eq:30b}  
  Z(k)\eq \big(P(k|k-1),\Xi(k-1)\big),
\end{equation}
is a Markov chain.
\end{lem}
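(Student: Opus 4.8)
The plan is to exhibit $Z(k+1)$ as a deterministic function of $Z(k)$ together with ``fresh'' randomness whose conditional law, given a $\sigma$-algebra containing the history $\mathcal{F}_k\eq\sigma\big(Z(0),\dots,Z(k)\big)$, depends on the past only through $Z(k)$; a tower-property argument then yields the Markov property of $\{Z\}_{\Nset_0}$.

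First I would expose the structure of $P(k+1|k)$ and of $C(k)$. Since $\{A\}_{\Nset_0}$, $\{Q\}_{\Nset_0}$ and $\{R_m\}_{\Nset_0}$ are deterministic and $C(k)$ takes one of the $2^M$ values described after~\eqref{eq:42}, eliminating $K(k)$ in~\eqref{eq:11} shows that there is a deterministic measurable map $g_k$ with $P(k+1|k)=g_k\big(P(k|k-1),C(k)\big)$; iterating from $P(0|-1)=P_0$, it follows that $P(k|k-1)$, and hence $Z(k)$, is a measurable function of $P_0$ and of $C(0),\dots,C(k-1)$. In turn, Assumption~\ref{ass:policies} lets one write $C(k)=c\big(\Xi(k),\Lambda(k)\big)$ for a fixed measurable map $c$, where $\Lambda(k)\eq\big(h_1(k),\dots,h_M(k),\omega(k)\big)$ and $\omega(k)$ denotes an independent randomisation realising the Bernoulli outcomes $\gamma_m(k)$ from the received powers and bit-rates as in~\eqref{eq:10}: indeed $u_m(k)$ and $b_m(k)$ are static functions of $\big(\Xi(k),h_1(k),\dots,h_M(k)\big)$ by~\eqref{eq:41}, each $\gamma_m(k)$ is obtained from $\big(h_m(k)u_m(k),b_m(k),\omega_m(k)\big)$ via~\eqref{eq:10}, $\theta_m(k)=\prod_{\Ecal_i\in\edge(\path(S_m))}\gamma_i(k)$, and $C(k)$ stacks the blocks $\theta_m(k)C_m$ as in~\eqref{eq:42}. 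Hence $\mathcal{F}_k\subseteq\mathcal{G}_k\eq\sigma\big(\{\Xi\}_0^{k-1},\{\Lambda\}_0^{k-1}\big)$.

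The core step is the conditional-independence claim that, conditionally on $\mathcal{G}_k$, the state $\Xi(k)$ is distributed according to the transition law $p_{\Xi(k-1),\,\cdot}$ of Assumption~\ref{ass:Markov}, and $\Lambda(k)$ is then drawn from its conditional distribution given $\Xi(k)$ alone, independently of the remainder of the past; this combines the Markovianity of $\{\Xi\}_{\Nset_0}$ with the modelling assumption that the channel gains, and the auxiliary draws $\omega$, are conditionally independent across time and links given the network state, cf.~\eqref{eq:27}. Granting this, and recalling that $P(k|k-1)$ is $\mathcal{G}_k$-measurable, for every bounded measurable $f$ one obtains, using
\begin{equation*}
  Z(k+1)=\Big(g_k\big(P(k|k-1),c(\Xi(k),\Lambda(k))\big),\,\Xi(k)\Big),
\end{equation*}
that $\E\{f(Z(k+1))\,|\,\mathcal{G}_k\}=\Psi_k\big(P(k|k-1),\Xi(k-1)\big)=\Psi_k(Z(k))$ for some measurable $\Psi_k$. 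Since the right-hand side is $\mathcal{F}_k$-measurable and $\mathcal{F}_k\subseteq\mathcal{G}_k$, the tower property gives $\E\{f(Z(k+1))\,|\,\mathcal{F}_k\}=\Psi_k(Z(k))$; as $f$ was arbitrary, $\{Z\}_{\Nset_0}$ is Markov.

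The hard part will be making the conditional-independence claim precise: \eqref{eq:27} asserts conditional independence only for pairs of channel gains, so one must invoke the full conditional-independence structure implicit in the block-fading model, together with the independence across time and links of the randomisations $\omega$, before appealing to the tower property and Assumption~\ref{ass:Markov}. A minor point is the interpretation of $Z(0)$, for which $\Xi(-1)$ should be read as the prescribed initial state of the chain (equivalently, the claim may be restricted to $k\in\Nset$).
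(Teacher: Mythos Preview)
Your proposal is correct and follows essentially the same line of reasoning as the paper's own proof, only with considerably more measure-theoretic detail: the paper simply records that $\Prob\{C(k)\,|\,\Xi(k-1),\Xi(k-2),\dots\}=\Prob\{C(k)\,|\,\Xi(k-1)\}$ (by Markovianity of $\{\Xi\}_{\Nset_0}$ and conditional independence of the dropouts given the network state) and then remarks that~\eqref{eq:11} yields the claim because $\{A\}_{\Nset_0}$, $\{Q\}_{\Nset_0}$ and $\{R\}_{\Nset_0}$ are deterministic. Your explicit representation $Z(k+1)=\big(g_k(P(k|k-1),c(\Xi(k),\Lambda(k))),\Xi(k)\big)$ together with the tower-property argument is precisely a rigorous unpacking of that sketch.
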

\begin{proof}
  Recall that
  the network state $\{\Xi\}_{\Nset_0}$ is Markovian
  and that, for given network states, the dropout processes are independent. Therefore,
   the distribution of the matrix $C(k)$  satisfies, for all $k\in\Nset_0$,
   \begin{equation*}
     \Prob\{C(k)\,|\,
     \Xi(k-1),\Xi(k-2),\dots\}
     =\Prob\{C(k)\,|\,\Xi(k-1)\}.
   \end{equation*}
The result  follows from \eqref{eq:11}, since
$\{A\}_{\Nset_0}$, $\{Q\}_{\Nset_0}$ and $\{R\}_{\Nset_0}$ are deterministic sequences. 
\end{proof}

\begin{lem}
\label{lem:bound}
  Suppose that Assumption~\ref{ass:Markov} holds and define
  \begin{equation}
    \label{eq:45}
    V_k\eq \tr P (k|k-1),
  \end{equation}
see~(\ref{eq:21}).
  Then $V_k\in\Rset_{\geq 0}$ for all $k\in\Nset_0$. Furthermore, there exists
  $W\in\Rset_{\geq 0}$,
such that,  for all $k\in\Nset_0$,  
  \begin{equation}
    \label{eq:46}
    \E\big\{ V_{k+1} \,|\, Z(k)=(P,i) \big\} \leq W + \nu_i \big(
  \|A(k)\|^2 \tr P +\tr Q(k) \big),
  \end{equation}
where $Z(k)$ is as defined
  in~\eqref{eq:30b}, and $\nu_i$ in~(\ref{eq:50}).
\end{lem}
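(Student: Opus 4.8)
The plan is to combine the elementary fact that a Kalman measurement update can only decrease the covariance with a uniform \emph{a-priori} bound on the one-step predicted covariance that holds whenever $C(k)$ is full column rank, and then to average over the two possible values of $r(k)$ using that, once $\Xi(k-1)$ is fixed, the event $\{r(k)=0\}$ does not depend on $P(k|k-1)$. First I would check, by induction on $k$, that $P(k|k-1)\succeq 0$ and $\|P(k|k-1)\|<\infty$ for every $k\in\Nset_0$: this holds for $k=0$ since $P(0|-1)=P_0$, and~\eqref{eq:11} can be rewritten as $P(k+1|k)=A(k)P(k|k)A(k)^T+Q(k)$ with the filtered covariance
\begin{equation*}
  P(k|k)=P(k|k-1)-P(k|k-1)C(k)^T\big(C(k)P(k|k-1)C(k)^T+R(k)\big)^{-1}C(k)P(k|k-1)
\end{equation*}
satisfying $0\preceq P(k|k)\preceq P(k|k-1)$, so that boundedness of $\{A\}_{\Nset_0}$ and $\{Q\}_{\Nset_0}$ propagates finiteness; hence $V_k=\tr P(k|k-1)\in\Rset_{\geq 0}$. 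The same identity together with $\tr(APA^T)\leq\|A\|^2\tr P$ for $P\succeq 0$ gives the crude deterministic bound $V_{k+1}\leq\|A(k)\|^2 V_k+\tr Q(k)$, which I will use on the event $\{r(k)=0\}$.

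On the complementary event $\{r(k)=1\}$, i.e.\ when $C(k)$ has full column rank, the information form of the update gives $P(k|k)\preceq\big(C(k)^T R(k)^{-1}C(k)\big)^{-1}$; since by~\eqref{eq:42} the matrix $C(k)$ takes only finitely many values and on this event is full column rank, $\lambda_{\min}\big(C(k)^TC(k)\big)$ is bounded below by a positive constant while $\|R(k)\|$, $\|A(k)\|$ and $\tr Q(k)$ are bounded above, whence $V_{k+1}\leq W$ for a finite constant $W$ that does not depend on $k$ or on $P(k|k-1)$. I would then condition on $Z(k)=(P,i)$ and split according to $r(k)$:
\begin{equation*}
  \begin{split}
    \E\big\{V_{k+1}\,\big|\,Z(k)=(P,i)\big\}&\leq W\,\Prob\big\{r(k)=1\,\big|\,Z(k)=(P,i)\big\}\\
    &\quad+\big(\|A(k)\|^2\tr P+\tr Q(k)\big)\Prob\big\{r(k)=0\,\big|\,Z(k)=(P,i)\big\},
  \end{split}
\end{equation*}
bound the first probability by $1$, and identify $\Prob\{r(k)=0\,|\,Z(k)=(P,i)\}=\Prob\{r(k)=0\,|\,\Xi(k-1)=i\}=\nu_i$ to obtain~\eqref{eq:46}.

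The main obstacle is this last identification. It rests on the observation underlying Lemma~\ref{lem:markov}: the conditional law of $C(k)$ given the whole past of $\{\Xi\}_{\Nset_0}$ depends only on $\Xi(k-1)$, whereas $P(k|k-1)$ is a deterministic function of $P_0$ and $\{C(j)\}_{j<k}$, and the latter are conditionally independent of $C(k)$ given $\{\Xi(j)\}_{j\leq k-1}$ by the block-fading structure of Section~\ref{sec:transmission-effects}; hence appending $P(k|k-1)=P$ to the conditioning does not alter $\Prob\{r(k)=0\}$. Everything else is routine: the deterministic bound is immediate, and the uniform constant $W$ only uses the finiteness of the range of $C(k)$ together with the standing boundedness assumptions on $A$, $Q$ and $R$.
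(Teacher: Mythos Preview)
Your proof is correct and follows essentially the same route as the paper's: split the conditional expectation according to $r(k)\in\{0,1\}$, bound $V_{k+1}$ by $\|A(k)\|^2\tr P+\tr Q(k)$ on $\{r(k)=0\}$, bound it uniformly on $\{r(k)=1\}$, and then identify $\Prob\{r(k)=0\mid Z(k)=(P,i)\}=\nu_i$ via the Markov structure of Section~\ref{sec:transmission-effects}. The only noteworthy difference is how the uniform bound on $\{r(k)=1\}$ is obtained: you use the information form $P(k|k)\preceq\big(C(k)^TR(k)^{-1}C(k)\big)^{-1}$, whereas the paper compares the Kalman predictor against the suboptimal least-squares predictor $\check{x}(k+1)=A(k)\big(C(k)^TC(k)\big)^{-1}C(k)^Ty(k)$ and invokes optimality; your argument is a little cleaner but tacitly needs $R(k)\succ 0$, while the paper's only needs $\|R(k)\|$ bounded.
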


\begin{proof}
  The fact that $V_k$ is non-negative follows directly from~(\ref{eq:45}).
  To prove~\eqref{eq:46}, it is  convenient to recall~\eqref{eq:12} and
condition as follows:
\begin{equation}
  \label{eq:31b}
\begin{split}
  \E&\big\{ V_{k+1} \,|\, Z(k) \big\}\\ 
&=\E\big\{ V_{k+1} \,|\, Z(k),r(k) = 1 \big\} \Prob\{r(k) =1\,|\,Z(k)\}\\
&\,+ \E\big\{V_{k+1} \,|\, Z(k),r(k) = 0 \big\} \Prob\{r(k) =0\,|\,Z(k)\}.
\end{split}
\end{equation}
We  next examine the cases $r(k) \in\{0,1\}$  separately.

\par 1) For $r(k)=1$,  $C(k)$ is full rank. Therefore,
 a simple  predictor for 
$x(k+1)$ given  $y(k)\subset\Ical(k)$,
is given by 
  $$\check{x}(k+1) = A(k)\big(C(k)^TC(k)\big)^{-1}C(k)^T y(k),$$
in which case
$$  \check{x}(k+1)  - x(k+1) = A(k)\big(C(k)^TC(k)\big)^{-1}C(k)^Tv(k) -w(k),$$
 where
 $
   v(k)\eq
   \begin{bmatrix}
     v_1(k)^T&v_2(k)^T&\dots &v_M(k)^T
   \end{bmatrix}^T$.
Since, by assumption, $\{A\}_{\Nset_0}$, $\{Q\}_{\Nset_0}$ and $\{R\}_{\Nset_0}$
are bounded, 
 there exists a constant $W\in\Rset_{\geq 0}$,  such that
$$\E \big\{\big(\check{x}(k+1)-x(k+1)\big) \big(\check{x}(k+1)-x(k+1)\big)^T\big\}
\preceq (W/n) I_n.$$ 
Since the Kalman filter  gives the minimum conditional error covariance matrix,
and by the fact that for any square matrix $F$, $\E \{\tr F\} = \tr \E \{F\}$,
we 
 obtain the bound\footnote{Clearly,~\eqref{eq:38b} is not a tight bound, but it
   suffices for our  purpose.}
\begin{equation}
  \label{eq:38b}
  \begin{split}
    \E&\big\{ V_{k+1} \,|\, Z(k),r(k) = 1 \big\}\Prob\{r(k)=1 | Z(k)\}\\
    &\quad\leq W
    \Prob\{r(k)=1 | Z(k)\}\leq W.
  \end{split}
\end{equation}
\par 2) For $r(k) =0$, the estimator covariance matrix $P(k+1|k)$ is
upper-bounded  by  the worst case, where $\gamma_m(k) =
0,\forall m \in\{1,2,\dots,M\}$. We, thus have 
\begin{equation}
  \label{eq:40b}
\begin{split}
  &\E\big\{V_{k+1} \,|\, Z(k)=(P,i),r(k) = 0 \big\} \leq \E\big\{V_{k+1} \,|\,
  Z(k)=(P,i),\\
  &\gamma_1(k) = \gamma_2(k)=\dots \gamma_M(k)=0\big\}= 
\tr\big(A(k)PA(k)^T+Q(k)\big)\\
  &=\tr\big(A(k)^TA(k)P\big)+\tr Q(k) \leq \|A(k)\|^2\tr P +\tr Q(k), 
\end{split}
\end{equation}
where we have used~\eqref{eq:11} and  \cite[Fact 8.12.29]{bernst09}.

\par  To calculate $\Prob \{r(k)=0\,|\,Z(k)=(P,i)\}$, we condition upon
$\Xi(k)$ and use Assumption~\ref{ass:Markov}:
\begin{equation}
\begin{split}
\label{eq:35c}
   &\Prob \{r(k)=0\,|\,Z(k)=(P,i)\}\\
   &= \! \sum_{j\in\Bset} \Prob\{r(k)=0 | P(k|k-1)=P,\Xi(k-1)=i   ,\Xi (k)=j\}\\
   &\qquad\qquad \times\,\Prob\{\Xi(k)=j\,|\,P(k|k-1)=P,\Xi(k-1)=i\}   \\
 & =  \sum_{j\in\Bset} \Prob\{r(k)= 0 \,|\, \Xi(k)=j\}\\
 &\qquad\qquad \times\,
\Prob\{\Xi (k)=j\,|\,\Xi(k-1)=i\}=\nu_i.
\end{split}
\end{equation}
The result  follows upon replacing~\eqref{eq:38b}--\eqref{eq:35c}
into~\eqref{eq:31b}.
\end{proof}

\emph{Proof of Theorem~\ref{thm:multiple_sensors}:}
  We will use a stochastic Lyapunov function approach with candidate function $V_k$
  introduced in~\eqref{eq:45}; see, e.g.,\cite{kushne71,meyn89}.  By Lemma~\ref{lem:bound}, we have
  \begin{equation}
\begin{split}
\label{eq:36b}
0\leq \E&\big\{ V_{k+1} \,|\, Z(k)=(P,i) \big\}\\
&\leq W +  \big(
  \|A(k)\|^2 \tr P +\tr Q(k) \big)\nu_i \\
&\leq\nu_i \|A(k)\|^2 V_k +\bar{\beta} \leq \rho V_k
+\bar{\beta},\quad \forall k \in \Nset_0,
\end{split}
\end{equation}
where $\rho\in[0,1)$ is as in~\eqref{eq:18b} and $$\bar{\beta} \eq W +
\max_{i\in\Nset_0} \nu_i \times \max_{k\in\Kset}\tr Q(k) \in \Rset_{\geq 0}.$$
Since~\eqref{eq:36b} holds for all $Z(k)=\big(P(k|k-1),\Xi(k-1)\big)$ and, by Lemma~\ref{lem:markov},
$\{Z\}_{\Nset_0}$  is Markovian, we can use \cite[Prop.\ 3.2]{meyn89}
 to conclude
  that~\eqref{eq:36b} is a sufficient condition for
  \begin{equation}
    \label{eq:39b}
    \begin{split}
      0\leq \E &\big\{ V_k \,|\, Z(0)=Z \big\} \leq \rho^k V_0 + \bar{\beta}
      \sum_{i=0}^{k-1}\rho^i\\
      &= \rho^k V_0 +\bar{\beta}
      \frac{1-\rho^k}{1-\rho},\quad \forall k\in\Nset_0.
    \end{split}
  \end{equation}
      Therefore, upon noting that $P(0|-1)=P_0$ is
  given,~\eqref{eq:39b} 
  gives that~\eqref{eq:14} holds with $\alpha= \rho V_0$ and
  $\beta=\bar{\beta}/(1-\rho)$. \hfill $\blacksquare$

\subsection{Proof of Theorem \ref{thm:semimarkov}}
\label{sec:proof-theor-refthm:s}
To derive our result,  we will first focus on the time-instances $\Kset$ defined
in~(\ref{eq:8}). The key property we will use is that whilst in this case $\{\Xi\}_{\Nset_0}$ is not
  Markovian, the embedded chain $\{\Xi\}_{\Kset_0}$ is Markovian. We begin by extending
  Lemmas~\ref{lem:markov} and~\ref{lem:bound} to the model in
  Assumption~\ref{ass:dwell}.
\begin{lem}
  \label{lem:markov_semi}
Suppose that Assumptions~\ref{ass:policies} and~\ref{ass:dwell} hold. Then 
  $\{Z\}_{{\Kset}_0}$ defined as in~(\ref{eq:30b}) is Markovian. 
\end{lem}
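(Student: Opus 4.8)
The plan is to follow the line of reasoning of Lemma~\ref{lem:markov}, but to advance one whole sojourn at a time along the transition set $\Kset=\{k_\ell\}$ and to invoke that, under Assumption~\ref{ass:dwell}, the embedded sequence $\{\Xi(k_\ell)\}_{\ell\in\Nset_0}$ is a time-homogeneous Markov chain with transition probabilities $q_{ij}$, while each sojourn length $\Delta_\ell$, conditioned on the embedded state $\Xi(k_\ell)$, is independent of the past with law $\psi_{\Xi(k_\ell)}(\cdot)$.

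Fix $\ell\in\Nset_0$ and write $P_\ell\eq P(k_\ell|k_\ell-1)$ and $i_\ell\eq\Xi(k_\ell-1)$, so that $Z(k_\ell)=(P_\ell,i_\ell)$ as in~\eqref{eq:30b}; since $\Xi$ is constant over each sojourn, see~\eqref{eq:16}, we have $i_\ell=\Xi(k_{\ell-1})$, i.e.\ the discrete component of $Z(k_\ell)$ is the previous value of the embedded chain. I would then verify three facts. \emph{(a)}~Conditioned on the information accumulated through time $k_\ell-1$ (the earlier embedded states, the transition instants, and the dropout outcomes on $[0,k_\ell-1]$ that build $P_\ell$ deterministically via~\eqref{eq:11}), the next embedded state $\Xi(k_\ell)$ has law $q_{i_\ell\,\cdot}$; this combines the Markov property of $\{\Xi(k_m)\}$ with the fact that, by the fading model~\eqref{eq:27} under the policies~\eqref{eq:41}, the dropout outcomes on $[0,k_\ell-1]$ — being conditionally independent given the sojourn states that produced them — carry no information about $\Xi(k_\ell)$ beyond $\Xi(k_{\ell-1})$. \emph{(b)}~Given $\Xi(k_\ell)=j$, the holding time $\Delta_\ell$ has law $\psi_j(\cdot)$ and, still conditioned on $j$, the matrices $C(k_\ell),C(k_\ell+1),\dots,C(k_\ell+\Delta_\ell-1)$ are conditionally independent across time and across sensors with a common law indexed by $j$, all independent of the past; here one combines~\eqref{eq:19}, Assumption~\ref{ass:policies}, \eqref{eq:27} and~\eqref{eq:16}. \emph{(c)}~Iterating~\eqref{eq:11} over the instants $k_\ell,\dots,k_{\ell+1}-1$ expresses $P_{\ell+1}=P(k_{\ell+1}|k_{\ell+1}-1)$ as a fixed measurable map of $P_\ell$, of the deterministic sequences $\{A\}$, $\{Q\}$, $\{R\}$, and of the block $\big(C(k_\ell),\dots,C(k_\ell+\Delta_\ell-1)\big)$, while $i_{\ell+1}=\Xi(k_{\ell+1}-1)=\Xi(k_\ell)=j$ by~\eqref{eq:16}.

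Chaining \emph{(a)}--\emph{(c)}, the conditional law of $Z(k_{\ell+1})$ given $\big(Z(k_0),\dots,Z(k_\ell)\big)$ is obtained by sampling $j\sim q_{i_\ell\,\cdot}$, then $\Delta_\ell\sim\psi_j$, then the dropout block, and finally evaluating the Kalman map — a recipe that depends on the conditioning only through $Z(k_\ell)=(P_\ell,i_\ell)$. Hence $\Prob\{Z(k_{\ell+1})\in\cdot\,|\,Z(k_\ell),\dots,Z(k_0)\}=\Prob\{Z(k_{\ell+1})\in\cdot\,|\,Z(k_\ell)\}$, which is the asserted Markov property of $\{Z\}_{\Kset_0}$.

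I expect the crux to be fact \emph{(a)}: rigorously establishing that the covariance $P_\ell$ — equivalently, the whole dropout history that feeds it — is conditionally independent of the freshly entered state $\Xi(k_\ell)$ given $i_\ell=\Xi(k_{\ell-1})$. One has to unwind that $P_\ell$ is assembled only from dropout outcomes on $[0,k_\ell-1]$, during which the network state takes the values $\Xi(k_0),\dots,\Xi(k_{\ell-1})$, so that the conditional-independence structure~\eqref{eq:27} of the fading model, under the control laws~\eqref{eq:41}, renders those outcomes uninformative about $\Xi(k_\ell)$ once $\Xi(k_{\ell-1})$ is known, after which the embedded Markov property closes the gap. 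The remaining steps are the multi-step analogue of what was already done for $C(k)$ in Lemma~\ref{lem:markov}, now carried out over the random horizon $\Delta_\ell$.
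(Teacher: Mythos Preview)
Your proposal is correct and follows essentially the same approach as the paper: both arguments hinge on the embedded chain $\{\Xi(k_\ell)\}$ being Markovian, on the conditional independence of the dropout/observation matrices given the network state, and on the Kalman recursion~\eqref{eq:11} being a deterministic map of these ingredients. Your version is considerably more detailed than the paper's three-line sketch --- in particular, you make explicit the need to handle the whole block $C(k_\ell),\dots,C(k_{\ell+1}-1)$ and the random horizon $\Delta_\ell$, and you correctly isolate fact~\emph{(a)} as the point requiring care --- whereas the paper simply asserts the key conditional independence for $C(k_\ell)$ and leaves the rest implicit.
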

\begin{proof}
  It is easy to see that, for all $k_\ell,k_{\ell-1}\in \Kset$,
   \begin{equation*}
     \Prob\{C(k_\ell)\,|\,
     \Xi(k_{\ell}-1),\Xi(k_{\ell-1}-1),\dots\}
     =\Prob\{C(k_\ell)\,|\,\Xi(k_\ell-1)\}.
   \end{equation*}
The result now follows from  \eqref{eq:11} and upon noting that
$\{\Xi\}_{\Kset_0}$ is Markovian, and 
$\{A\}_{\Nset_0}$,  $\{Q\}_{\Nset_0}$ and $\{R\}_{\Nset_0}$ are deterministic sequences. 
\end{proof}

\begin{lem}
\label{lem:bound_semi}
Suppose that Assumption~\ref{ass:dwell} holds and consider 
$$V_\ell\eq \tr
P(k_\ell|k_\ell-1)\in\Rset_{\geq 0},\quad \ell\in \Nset_0.$$ Then there exists
$W\in\Rset_{\geq 0}$ such that,  for all
    $\ell\in\Nset_0$,
\begin{equation}
  \label{eq:34}
  \begin{split}
    \E&\big\{ V_{\ell+1} \,|\, Z(k_\ell)=(P,i) \big\}\\
    &\leq W +
    \sum_{\delta=1}^\sigma\sum_{j\in\Bset} q_{ij}\psi_{j}(\delta)
    \mu_i(k_\ell,\delta) \|\Phi(k_\ell+\delta,k_\ell)\|^2 \tr P .
  \end{split}
 \end{equation}
\end{lem}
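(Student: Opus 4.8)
The plan is to mimic the proof of Lemma~\ref{lem:bound}, but now over a single holding interval $[k_\ell, k_{\ell+1})$ rather than a single time step. The key quantity is $V_{\ell+1} = \tr P(k_{\ell+1}|k_{\ell+1}-1)$, and we condition on the holding time $\Delta_\ell = \delta$ and on whether the observability matrix $\mathcal{O}(k_\ell + \delta - 1, k_\ell)$ has full rank, i.e.\ on $\varrho(k_\ell) \in \{0,1\}$. So the first step would be to write
\begin{equation*}
  \begin{split}
  \E\big\{V_{\ell+1} \,\big|\, Z(k_\ell)\big\}
  &= \sum_{\delta=1}^\sigma \Big( \E\big\{V_{\ell+1} \,\big|\, Z(k_\ell), \Delta_\ell = \delta, \varrho(k_\ell) = 1\big\} \Prob\{\Delta_\ell = \delta, \varrho(k_\ell)=1 \,|\, Z(k_\ell)\}\\
  &\qquad\quad + \E\big\{V_{\ell+1} \,\big|\, Z(k_\ell), \Delta_\ell = \delta, \varrho(k_\ell) = 0\big\} \Prob\{\Delta_\ell = \delta, \varrho(k_\ell)=0 \,|\, Z(k_\ell)\}\Big),
  \end{split}
\end{equation*}
using that $\Delta_\ell \le \sigma$ almost surely.

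For the full-rank case $\varrho(k_\ell)=1$, I would construct a one-shot least-squares predictor of $x(k_{\ell+1})$ built from the stacked received measurements $y(k_\ell), \dots, y(k_\ell + \delta - 1)$: since $\mathcal{O}(k_\ell+\delta-1,k_\ell)$ has full column rank, $x(k_\ell)$ is recoverable up to measurement noise, and then $x(k_{\ell+1})$ is recoverable up to measurement and process noise propagated by $\Phi$. Because $\{A\}_{\Nset_0}$, $\{Q\}_{\Nset_0}$, $\{R\}_{\Nset_0}$ are deterministic and bounded and $\delta \le \sigma$ is bounded, the resulting error covariance is bounded by some constant matrix, uniformly in $k_\ell$ and $\delta$; since the Kalman filter is optimal, $\E\{V_{\ell+1} \mid \cdots, \varrho = 1\} \le W$ for a finite $W \in \Rset_{\geq 0}$, exactly as in~\eqref{eq:38b}. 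Summing the weights $\Prob\{\Delta_\ell = \delta, \varrho(k_\ell)=1 \mid Z(k_\ell)\}$ over $\delta$ gives at most $1$, contributing the $W$ term in~\eqref{eq:34}.

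For the non-full-rank case $\varrho(k_\ell)=0$, I would bound $P(k_{\ell+1}|k_{\ell+1}-1)$ by its worst case, namely the open-loop propagation where no packet is received over the whole interval: $P(k_{\ell+1}|k_{\ell+1}-1) \preceq \Phi(k_\ell+\delta, k_\ell) P \Phi(k_\ell+\delta, k_\ell)^T + (\text{noise terms})$, so that $\E\{V_{\ell+1} \mid Z(k_\ell)=(P,i), \Delta_\ell = \delta, \varrho = 0\} \le \|\Phi(k_\ell+\delta,k_\ell)\|^2 \tr P + (\text{bounded})$, absorbing the bounded part into $W$, paralleling~\eqref{eq:40b}. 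Finally, for the probability weight I would condition on $\Xi(k_\ell)$ and use Assumption~\ref{ass:dwell}: by~\eqref{eq:19} and~\eqref{eq:25}, $\Prob\{\Delta_\ell = \delta, \varrho(k_\ell) = 0 \mid Z(k_\ell) = (P,i)\} = \sum_{j\in\Bset} q_{ij}\psi_j(\delta)\Prob\{\varrho(k_\ell) = 0 \mid \Xi(k_\ell)=j, \Delta_\ell=\delta\}$, which by the definition~\eqref{eq:33} of $\mu_i(k_\ell,\delta)$ matches the coefficient appearing in~\eqref{eq:34}. Collecting the two cases over all $\delta$ yields the claimed bound.

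The main obstacle I anticipate is the conditioning bookkeeping in the full-rank case: one must check that conditioning on $Z(k_\ell) = (P(k_\ell|k_\ell-1), \Xi(k_\ell-1))$, on $\Delta_\ell = \delta$, and on $\varrho(k_\ell) = 1$ does not interfere with the claim that the least-squares predictor error covariance is uniformly bounded — in particular that the dropout pattern, beyond forcing full rank, does not inflate the bound (it cannot, since dropping measurements only removes rows from $\mathcal{O}$, and full rank already suffices). The $\varrho = 0$ worst-case domination and the probability decomposition are then routine, mirroring Lemma~\ref{lem:bound} with $A(k)$ replaced by the transition matrix $\Phi(k_\ell+\delta, k_\ell)$ and the single-step quantity $\nu_i$ replaced by $\mu_i(k_\ell,\delta)$ weighted by the holding-time law.
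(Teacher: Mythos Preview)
Your proposal is correct and follows essentially the same approach as the paper's proof: condition on the holding time $\Delta_\ell=\delta$ and on $\varrho(k_\ell)\in\{0,1\}$, bound the full-rank branch via a least-squares predictor built from $\mathcal{O}(k_\ell+\delta-1,k_\ell)$ and Kalman optimality, bound the rank-deficient branch by the open-loop propagation through $\Phi(k_\ell+\delta,k_\ell)$, and identify the weight on the latter with $\mu_i(k_\ell,\delta)\sum_{j}q_{ij}\psi_j(\delta)$. The only cosmetic difference is that the paper first conditions on $\Delta_\ell$ (obtaining the factor $\sum_j q_{ij}\psi_j(\delta)$) and then on $\varrho(k_\ell)$ (obtaining $\mu_i(k_\ell,\delta)$), whereas you condition jointly; the two decompositions yield the same coefficient since both equal $\Prob\{\varrho(k_\ell)=0,\Delta_\ell=\delta\mid\Xi(k_\ell-1)=i\}$.
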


\begin{proof}
We first 
note that, given  Assumption~\ref{ass:dwell}, the holding times $\Delta_\ell$
have conditional distribution 
\begin{equation*}
  \begin{split}
    &\Prob\{\Delta_\ell=\delta\,|\,Z(k_\ell)=(P,i)\}
    =\Prob\{\Delta_\ell = \delta \,|\,\Xi(k_\ell-1)=i \} \\
    &=\sum_{j\in\Bset}
    \Prob\{\Delta_\ell = \delta\,|\,\Xi(k_\ell)=j,\Xi(k_{\ell}-1)=i \} \\
    &\qquad\times\Prob\{\Xi(k_{\ell})=j\,|\,\Xi(k_{\ell}-1)=i\}\\
    &=\sum_{j\in\Bset}
    \Prob\{\Delta_\ell = \delta\,|\,\Xi(k_\ell)=j\} 
    \Prob\{\Xi(k_{\ell})=j\,|\,\Xi(k_{\ell-1})=i\}\\
    &=\sum_{j\in\Bset}
    q_{ij}\psi_{j}(\delta),
  \end{split}
\end{equation*}
thus, 
\begin{equation}
\label{eq:48}
\begin{split}
  \E&\big\{ V_{\ell+1} \,|\, Z(k_\ell)=(P,i) \big\}\\ 
  &=
  \sum_{\delta=1}^\sigma\sum_{j\in\Bset} q_{ij}\psi_{j}(\delta) \E\big\{
  V_{\ell+1} \,|\, Z(k_\ell)=(P,i),\Delta_\ell=\delta \big\} .
\end{split}
\end{equation}
We next condition on  $\varrho(k_\ell)$ defined in~(\ref{eq:12b}) to obtain
\begin{equation}
  \label{eq:37}
  \begin{split}
  \E&\big\{ V_{\ell+1} \,|\, Z(k_\ell),\Delta_\ell \big\}
\leq \E\big\{ V_{\ell+1} \,|\, Z(k_\ell),\Delta_\ell,\varrho(k_\ell) = 1 \big\}\\
&+ \E\big\{V_{\ell+1} \,|\, Z(k_\ell),\Delta_\ell,\varrho(k_\ell) = 0 \big\}\\
&\qquad\times\Prob\{\varrho(k_\ell) =0\,|\,Z(k_\ell),\Delta_\ell\} 
\end{split}
\end{equation}
and use~(\ref{eq:33}) to write
\begin{equation}
  \label{eq:38}
  \begin{split}
    \Prob&\{\varrho(k_\ell) =0\,|\,Z(k_\ell)=(P,i),\Delta_\ell=\delta\}\\ 
    &=
    \Prob\{\varrho(k_\ell) =0\,|\,\Xi(k_\ell-1)=i, \Delta_\ell=\delta\}\\
    &=\sum_{j\in\Bset} \Prob\{\varrho(k_\ell)
    =0\,|\,\Xi(k_\ell)=j,\Xi(k_\ell-1)=i, \Delta_\ell=\delta\}\\
    &\qquad\times\Prob\{\Xi(k_\ell)=j|\Xi(k_\ell-1)=i\}\\
    &=\sum_{j\in\Bset}q_{ij} \Prob\{\varrho(k_\ell)
    =0\,|\,\Xi(k_\ell)=j, \Delta_\ell=\delta\}
    =\mu_i(k_\ell,\delta).
  \end{split}
\end{equation}
 In what follows, we examine the cases $\varrho(k_\ell) \in\{0,1\}$  separately.
\par 1) For $\varrho(k_\ell)=1$,  
 a suboptimal  predictor for 
$x(k_{\ell+1})$ which only uses the received measurements $\{y\}_{k_\ell}^{k_{\ell+1}-1} \subset\Ical(k_{\ell+1}-1)$
is given by 
\begin{equation*}
  \begin{split}
    \check{x}(k_{\ell+1}) &=
    \Phi(k_{\ell+1},k_\ell)\big(\mathcal{O}(k_{\ell+1}-1,k_\ell)^T
    \mathcal{O}(k_{\ell+1}-1,k_\ell)\big)^{-1}\\
    &\qquad\times\mathcal{O}(k_{\ell+1}-1,k_\ell)^T
    \begin{bmatrix}
      y(k_\ell)\\y(k_{\ell+1})\\
      \vdots\\y(k_{\ell+1}-1)
    \end{bmatrix}.
  \end{split}
\end{equation*}
Since $\{A\}_{\Nset_0}$, $\{Q\}_{\Nset_0}$ and $\{R\}_{\Nset_0}$
are assumed bounded, the matrices $\Phi(k_{\ell+1},k_\ell)$ and
$\mathcal{O}(k_{\ell+1}-1,k_\ell)$ are bounded also. Consequently, 
the  estimation error
covariance of $\check{x}(k_{\ell+1})$ is bounded. Thus, (due to optimality) in
case of the 
Kalman filter~(\ref{eq:11}), there exists 
$W_1\in\Rset_{\geq 0}$ such that
\begin{equation}
  \label{eq:39}
  \E\big\{ V_{\ell+1} \,|\, Z(k_\ell),\Delta_\ell,\varrho(k_\ell) = 1 \big\}\leq W_1.
\end{equation}
\par 2) For  $\varrho(k_\ell) =0$, the estimation error
covariance matrix $P(k_{\ell+1}|k_{\ell+1}-1)$ is 
upper-bounded  by that resulting from the worst case, i.e., where
 $\gamma_m(k) =
0$, $\forall m \in\{1,2,\dots,M\}$, $\forall
k\in\{k_\ell,k_\ell+1,\dots,k_{\ell+1}-1\}$.
Use of~(\ref{eq:11}) then gives
\begin{equation*}
  \begin{split}
    \E&\big\{V_{\ell+1} \,|\, Z(k_\ell)=(P,i),\Delta_\ell=\delta,\varrho(k_\ell)
    = 0 \big\}\\
    &\leq
    \E\big\{V_{\ell+1} \,|\, 
  Z(k_\ell)=(P,i),\Delta_\ell=\delta,C(k) = 0,\\
  &\qquad\qquad \forall
  k\in\{k_\ell,\dots,k_\ell +\delta-1\}\big\}\\ 
&=\tr\big(P(k_\ell+\delta |k_\ell+\delta-1) \,|\, P(k_\ell|k_\ell-1)=P, C(k) =
0,\\  &\qquad\qquad\forall 
k\in\{k_\ell,\dots,k_\ell +\delta-1\} \big)\\
&=\tr\bigg(\Phi(k_\ell+\delta,k_\ell) P \Phi(k_\ell+\delta,k_\ell)^T\\
& \;+
\sum_{j=1}^\delta \Phi(k_\ell+\delta,k_\ell+j)
Q(k_\ell+j-1)\Phi(k_\ell+\delta,k_\ell+j)^T\!\bigg) .
  \end{split}
\end{equation*}
Since $\{A\}_{\Nset_0}$ and $\{Q\}_{\Nset_0}$ are bounded, there exists
$W_2\in\Rset_{\geq 0}$ such that
\begin{equation}
  \label{eq:53}
  \begin{split}
    \E\big\{V_{\ell+1} \,|\, Z&(k_\ell)=(P,i),\Delta_\ell=\delta,\varrho(k_\ell)
    = 0 \big\}\\
    &\leq\tr\big(\Phi(k_\ell+\delta,k_\ell) P
    \Phi(k_\ell+\delta,k_\ell)^T\big) + W_2 \\
    &=\tr\big(  \Phi(k_\ell+\delta,k_\ell)^T\Phi(k_\ell+\delta,k_\ell)P\big)+W_2\\
    &\leq \|\Phi(k_\ell+\delta,k_\ell)\|^2 \tr P + W_2,
  \end{split}
\end{equation}
where we have used \cite[Fact 8.12.29]{bernst09}.
\par Substitution of~(\ref{eq:38})--(\ref{eq:53}) into~(\ref{eq:37}) provides
\begin{equation*}
  \begin{split}
    \E\big\{ V_{\ell+1} &\,|\, Z(k_\ell)=(P,i),\Delta_\ell=\delta \big\}\\
    &\leq
    W_1+\mu_i(k_\ell,\delta) \big( \|\Phi(k_\ell+\delta,k_\ell)\|^2 \tr P + W_2
    \big),
  \end{split}
\end{equation*}
thus,~(\ref{eq:48}) yields the bound
\begin{equation*}
  \begin{split}
    \E&\big\{ V_{\ell+1} \,|\, Z(k_\ell)=(P,i) \big\}    \leq
    \sum_{\delta=1}^\sigma\sum_{j\in\Bset} q_{ij}\psi_{j}(\delta)\\
    &\quad\big(
    W_1+\mu_i(k_\ell,\delta) \big( \|\Phi(k_\ell+\delta,k_\ell)\|^2 \tr P + W_2
    \big)\big)
  \end{split}
\end{equation*}
from where the result follows.
\end{proof}
To prove Theorem~\ref{thm:semimarkov}, we recall that, by
Lemma~\ref{lem:bound_semi} and~(\ref{eq:18c}), 
\begin{equation}
  \label{eq:17}
  0\leq  \E \{ \tr P(k_{\ell+1}|k_{\ell+1}-1) \,|\, Z(k_\ell)  \} \leq W 
+ \rho^\sigma
  \tr P(k_{\ell}|k_{\ell}-1), 
\end{equation}
 for all $\ell \in \Nset_0$. Thus, following as in the proof of Theorem~\ref{thm:multiple_sensors}, it is
easy to
establish exponential boundedness \emph{at the time instants $k_\ell\in\Kset$},
i.e., there exist $\alpha_1, \beta_1\in\Rset_{\geq 0}$ such that:
  \begin{equation}
    \label{eq:35}
    \E \{ \tr P(k_\ell|k_\ell-1)\} \leq \alpha_1 \rho^{\ell\sigma}
    +\beta_1\leq \alpha_1 \rho^{k_\ell}
    +\beta_1, \; \forall k_\ell \in\Kset,
  \end{equation}
where we have used the fact that $\Delta_\ell\leq \sigma$, thus, $k_\ell\leq
\ell\sigma$. 
To establish exponential boundedness \emph{at all instants $k\in\Nset_0$}, we
note that, similar to~(\ref{eq:53}), there exist finite $\alpha_2\geq 1$,
$W_3\in\Rset_{\geq 0}$ 
such that, for all $ t\in\{0,1,\dots,\Delta_\ell-1\}$, 
\begin{equation*}
  \begin{split}
    \tr P(k_\ell +t |k_\ell+t-1) &\leq \|\Phi(k_\ell +t,k_\ell)\|^2 \tr
    P(k_\ell|k_\ell-1) +W_3 \\
    &\leq \alpha_2 \rho^{t} \tr P(k_\ell|k_\ell-1) +W_3.
  \end{split}
\end{equation*}
 Taking expectation and using~(\ref{eq:35})
gives
\begin{equation*}
  \begin{split}
    \E&\{\tr P(k_\ell +t |k_\ell+t-1) \} \leq \alpha_1\alpha_2 \rho^{k_\ell+t}
    + \alpha_2\beta_1\rho^t +W_3\\
    &\leq \alpha_1\alpha_2 \rho^{k_\ell+t} +
    \alpha_2\beta_1 +W_3,\quad \forall t\in\{0,1,\dots,\Delta_\ell-1\}.
  \end{split}
\end{equation*}
The latter expression and~(\ref{eq:35}) show exponential boundedness:
\begin{equation*}
  \E\{\tr P(k|k-1) \} \leq  \alpha_1\alpha_2  {\rho}^k +
  \alpha_2\beta_1 +W_3,\quad
  \forall k \in\Nset_0. 
\end{equation*}
This proves Theorem~\ref{thm:semimarkov}.

\begin{biography}[{\includegraphics[width=1in,height=1.25in,clip,keepaspectratio]{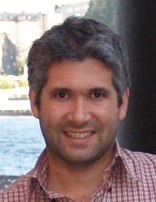}}]{\bf    Daniel E.\ Quevedo} (S'97--M'05) received Ingeniero Civil Electr\'onico
and Magister en Ingenier\'{\i}a Electr\'onica degrees from the Universidad
T\'ecnica Federico Santa Mar\'{\i}a, Valpara\'{\i}so, Chile in 2000.  In
2005, he received the Ph.D.~degree from The University of Newcastle,
Australia, where he is currently an Associate Professor.  He has been a
visiting researcher at various institutions, including Uppsala University,
Sweden, KTH Stockholm, Sweden, Aalborg University, Denmark, Kyoto University,
Japan, and INRIA Grenoble, France.
\par Dr.\ Quevedo was supported by a full scholarship from the alumni
association during his time at the Universidad 
T\'ecnica Federico Santa Mar\'{\i}a and received several university-wide
prizes upon graduating. He received the IEEE Conference on Decision and
Control Best Student Paper Award in 2003 and was also a finalist  in 
2002.  In
2009, he was awarded a five-year Australian Research Fellowship.   His  research
interests include several areas of automatic control, signal processing, and power
electronics. 
\end{biography}

\begin{biography}[{\includegraphics[width=1in,height=1.25in,clip,keepaspectratio]{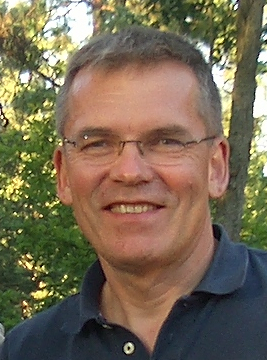}}]{\bf
    Anders Ahl\'en} (S'80--M'84--SM'90)  is full professor and holds
the chair in Signal Processing at Uppsala University where he is
also the head of the Signals and Systems Division of The Department of
Engineering Sciences. He was born in
Kalmar, Sweden, and received the PhD degree in Automatic Control
from Uppsala University. He was with the Systems and
Control Group, Uppsala University from 1984-1992 as an Assistant
and Associate Professor  in
Automatic Control. During 1991 he was a visiting researcher at the
Department of Electrical and Computer Engineering, The University
of Newcastle, Australia. He was a visiting professor at the same
university in 2008. In 1992 he was appointed Associate
Professor of Signal Processing at Uppsala University.
During 2001-2004 he was
the CEO of Dirac Research AB, a company offering state-of-the-art
audio signal processing solutions. He is currently the chairman of
the board of the same company. Since 2007 he has been a member of the Uppsala VINN Excellence Center for
Wireless Sensor Networks,
WISENET. His research interests, which
include Signal Processing, Communications and Control, are
currently focused on Signal Processing for Wireless
Communications, Wireless Systems Beyond 3G, Wireless Sensor
Networks, Wireless Control, and Audio Signal Processing.
\par From 1998 to 2004 he was the Editor of Signal and Modulation Design
for the IEEE Transactions on Communications.
\end{biography}

\begin{biography}[{\includegraphics[width=1in,height=1.25in,clip,keepaspectratio]{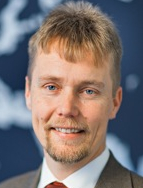}}]{\bf 
Karl H.\ Johansson} (SM'08) is Director of the KTH ACCESS Linnaeus Centre and
Professor at the School of Electrical Engineering, Royal Institute of
Technology, Sweden. He is a Wallenberg Scholar and has held a Senior Researcher
Position with the Swedish Research Council. He received MSc and PhD degrees in
Electrical Engineering from Lund University. He has held visiting positions at
UC Berkeley (1998-2000) and California Institute of Technology (2006-2007). His
research interests are in networked control systems, hybrid and embedded
control, and control applications in automotive, automation and communication
systems. He was a member of the IEEE Control Systems Society Board of Governors
2009 and the Chair of the IFAC Technical Committee on Networked Systems
2008-2011. He has been on the Editorial Boards of Automatica (2003-2006) and
IEEE Transactions on Automatic Control (2008-2010), and is currently on the
Editorial Boards of IET Control Theory and Applications and the International
Journal of Robust and Nonlinear Control. He was the General Chair of the
ACM/IEEE Cyber-Physical Systems Week (CPSWeek) 2010 in Stockholm. He has served
on the Executive Committees of several European research projects in the area of
networked embedded systems. In 2009, he received the Best Paper Award of the
IEEE International Conference on Mobile Ad-hoc and Sensor Systems. He was
awarded an Individual Grant for the Advancement of Research Leaders from the
Swedish Foundation for Strategic Research in 2005. He received the triennial
Young Author Prize from IFAC in 1996 and the Peccei Award from the International
Institute of System Analysis, Austria, in 1993. He received Young Researcher
Awards from Scania in 1996 and from Ericsson in 1998 and 1999. 
\end{biography}

\end{document}